\newtheorem{example}{Example}[section]
\newtheorem{theorem}[example]{Theorem}
\newtheorem{lemma}[example]{Lemma}
\newtheorem{corollary}[example]{Corollary}
\newtheorem{remark}[example]{Remark}
\newtheorem*{maintheorem*}{Main Theorem}
\numberwithin{equation}{section}
\newcommand{\N}{\mathbb{N}}
\newcommand{\R}{\mathbb{R}}
\newcommand{\pt}{\partial_t}
\newcommand{\px}{\partial_x}
\newcommand{\pxx}{\partial_{xx}^2}
\newcommand{\py}{\partial_y}
\newcommand{\ptx}{\partial_{tx}^2}
\newcommand{\weakstar}{\overset{\star}\rightharpoonup}
\DeclareMathOperator*{\esssup}{ess\sup}
\DeclareMathOperator*{\essinf}{ess\inf}
\newcommand{\loc}{\textnormal{loc}}
\newcommand{\eps}{\varepsilon}
\newcommand{\dd}{\,\mathrm{d}}
\renewcommand{\d}{\mathrm{d}}
\newcommand{\D}{{\mathcal D}}
\DeclareMathOperator{\BV}{BV}
\newcommand{\e}{\varepsilon}
\newcommand{\Lip}{\mathrm{Lip}}
\newcommand{\be}{\begin{equation}}
	\newcommand{\eq}{\end{equation}}
\newcommand{\weaks}{\stackrel{*}{\rightharpoonup}}
\newcommand{\ess}{\mathrm{ess}}
\newcommand{\Osc}{\mathrm{Osc}}
\begin{document}
                
\title[Nonlocal Ole\u{\i}nik estimates]{Ole\u{\i}nik-type estimates for nonlocal conservation laws \\ and applications to the nonlocal-to-local limit}

\subjclass[2020]{35L65}
\keywords{Nonlocal conservation laws, nonlocal flux, singular limit, nonlocal-to-local limit, entropy condition, Ole\u{\i}nik condition. \vspace{1.5mm}}

\author[GMC]{Giuseppe Maria Coclite}
\address[G. M. Coclite]{Politecnico di Bari, Dipartimento di Meccanica, Matematica e Management, Via E. Orabona 4, 70125 Bari,  Italy.}
\email[]{giuseppemaria.coclite@poliba.it}

\author[MC]{Maria Colombo}
\address[M.~Colombo]{EPFL SB, Station 8, 1015 Lausanne, Switzerland.}
\email{maria.colombo@epfl.ch}

\author[GC]{Gianluca Crippa}
\address[G.~Crippa]{Universit\"at Basel, Departement Mathematik und Informatik,  Spiegelgasse 1, 4051 Basel, Switzerland.}
\email{gianluca.crippa@unibas.ch}

\author[NDN]{Nicola De Nitti}
\address[N. De Nitti]{Friedrich-Alexander-Universität Erlangen-Nürnberg, Department of Data Science, Chair for Dynamics, Control and Numerics (Alexander von Humboldt Professorship), Cauerstr. 11, 91058 Erlangen, Germany.}
\email{nicola.de.nitti@fau.de}

\author[AK]{Alexander Keimer}
\address[A. Keimer]{Friedrich-Alexander-Universität Erlangen-Nürnberg, Department of Mathematics, Cauerstr. 11, 91058 Erlangen, Germany.}
\email{alexander.keimer@fau.de}

\author[EM]{Elio Marconi}
\address[E.~Marconi]{Università degli Studi di Padova, Dipartimento di Matematica Tullio Levi-Civita, Via Trieste 63, 35121 Padova, Italy.}
\email{elio.marconi@unipd.it}

\author[LP]{Lukas Pflug}
\address[L. Pflug]{Friedrich-Alexander-Universität Erlangen-Nürnberg, (1) Competence Unit for Scientific Computing, Martensstr. 5a, 91058 Erlangen, Germany; (2) Department of Mathematics, Chair of Applied Mathematics (Continuous Optimization), Cauerstr. 11, 91058 Erlangen, Germany.}
\email{lukas.pflug@fau.de}

\author[LVS]{Laura V.~Spinolo}
\address[L.V. Spinolo]{IMATI-CNR, Via Ferrata 5, 27100 Pavia, Italy.}
\email{spinolo@imati.cnr.it}

\begin{abstract}
We consider a class of nonlocal conservation laws with exponential kernel and prove that quantities involving the nonlocal term  $W:=\mathds{1}_{(-\infty,0]}(\cdot)\exp(\cdot) \ast \rho$  satisfy an Ole\u{\i}nik-type  entropy condition. More precisely, under different sets of assumptions on the velocity function $V$, we prove that $W$ satisfies a one-sided Lipschitz condition and that $V'(W) W \partial_x W$ satisfies a one-sided bound, respectively. As a byproduct, we deduce that, as the exponential kernel is rescaled to converge to a Dirac delta distribution, the weak solution of the nonlocal problem converges to the unique entropy-admissible solution of the corresponding local conservation law, under the only assumption that the initial datum is essentially bounded and not necessarily of bounded variation.
\end{abstract}

\maketitle

\section{Introduction}
\label{sec:intro}
We study the nonlocal conservation law 
\begin{align}\label{eq:cl}
\begin{cases}
\partial_t \rho_\eps(t,x)  + \partial_x\big(V\big(W_{\eps}[\rho_\eps](t,x))\rho_\eps(t,x)\big)   	= 0,	& (t,x)\in  (0,T) \times \R, \\
\rho_\eps(0,x) = \rho_0(x),	&  x\in\R,
\end{cases}
\end{align}
with a velocity function $V:\R \to \R$ and an exponentially-weighted nonlocal impact  
\begin{align}\label{eq:W} 
    W_{\eps}[\rho_\eps](t,x)&\coloneqq\tfrac{1}{\eps}\int_{x}^{\infty} \exp \big(\tfrac{x-y}{\eps}\big)\rho_\eps(t,y)\dd y , &(t,x)\in (0,T)\times\R,
\end{align}
where $\eps >0$ and $T>0$. We note that, for \((t,x)\in(0,T)\times\R\), the nonlocal term $W_{\eps}$ satisfies the following equation:
\begin{align}\label{eq:WWxq}
    \partial_{x}W_{\eps}[\rho_{\eps}](t,x)&=\partial_{x} \tfrac{1}{\eps}\int_{x}^{\infty}\exp\big(\tfrac{x-y}{\eps}\big)\rho_{\eps}(t,y)\dd y=\tfrac{1}{\eps}W_{\eps}[\rho_{\eps}](t,x)-\tfrac{1}{\eps}\rho_{\eps}(t,x).
\end{align}

The existence and uniqueness of solutions for nonlocal conservation laws have been thoroughly analyzed in recent years: we refer to \cite{MR3447130,MR4502637,MR3670045,keimer2018existence,MR3890783} and references therein for an overview. Furthermore, the convergence of nonlocal conservation laws to the corresponding local models as the nonlocal weight tends to a Dirac delta distribution has attracted much attention. Several results in this direction are available in the literature (see~\cite{MR4110434,MR4283539,2012.13203,MR4265719,MR4340167,MR4300935,2206.03949, MR3961295, MR3944408}). In particular, the most recent ones -- \cite{2012.13203,2206.03949} -- provide satisfactory answers in case the initial datum has bounded total variation. 

Our main aim is to prove \emph{Ole\u{\i}nik-type inequalities} for quantities involving the nonlocal term $W_\eps$. Then, we use them to prove that, as $\eps \to 0^+$, the solution of \eqref{eq:cl} converges to the unique entropy admissible solution of the (local) conservation law 
\begin{align}\label{eq:cll}
\begin{cases}
\partial_t \rho(t,x)  + \partial_x\big( V(\rho(t,x))\rho(t,x)\big)   	= 0,	& (t,x)\in  (0,T) \times \R, \\ 
\rho(0,x) = \rho_0(x),	&  x\in\R,
\end{cases}
\end{align}
assuming that the initial datum is not necessarily of bounded variation, but only essentially bounded, which is a novel contribution compared to the previous literature. A main point in the study of this singular limit problem is establishing the precompactness in $L^1_{\loc}$ of the solutions $\rho_\e$ of the nonlocal equation.  In our approach, this is a consequence of the maximum principle (uniform in $\eps$) and of the Ole\u{\i}nik-type estimate, which also rules out the emergence of non-entropic shocks, thus leading to the entropy admissibility of the accumulation points of the family $\rho_\eps$ as~$\eps \to 0^+$.

For the scalar (local) conservation law 
\begin{align}\label{eq:scl} 
\partial_t \rho(t,x)+\partial_x f(\rho(t,x))=0, \qquad (t,x) \in (0,T)\times\R,
\end{align}
the celebrated result by Ole\u{\i}nik \cite{MR0151737} (see also the following contributions, which are contemporary to Ole\u{\i}nik's work: Lax, \cite{MR93653}; Lady\v{z}enskaya \cite{MR0094539}; and Hopf \cite{MR47234}) states that if \(f\) is uniformly strictly convex, i.e. \(f^{\prime \prime}(\cdot) \geq \kappa>0\) on \(\mathbb{R}\), then any entropy admissible solution of \eqref{eq:scl} satisfies the following one-sided Lipschitz estimate:
\begin{align*}
\rho(t, y)-\rho(t, x) \leq \frac{y-x}{\kappa t}, \qquad  t>0, \   x,y\in\R,\  x \leq y.
\end{align*}
The Ole\u{\i}nik estimate provides an equivalent characterization of entropy solutions and is an  example of the fact that the nonlinearity of the PDE provides a regularizing
effect on the solution: indeed, as this upper estimate only allows for decreasing jumps, it implies that \(L^{\infty}\) data are instantaneously regularized to functions of locally bounded variation \(\left(\mathrm{BV}_{\mathrm{loc}}\right)\). On the contrary, a linear flux $f(\rho) = b\rho$ (with $b \in \R$) does not generate
additional regularity as  the solution is simply a translation of the initial datum: $\rho(t, x) = \rho(0,x - bt)$.

This inequality can be written in a `sharp' form (see \cite{MR0481581,MR688972}): when \(f^{\prime \prime}(\cdot) \geq 0\) and moreover there are no non-trivial intervals where $f$ is affine (\emph{Tartar's condition} \cite{MR584398}), we have
\begin{align*}
f^{\prime}(\rho(t, y))-f^{\prime}(\rho(t, x)) \leq \frac{y-x}{t}, \qquad  t>0, \  x,y\in\R,\  x \leq y .
\end{align*}
Several inequalities of Ole\u{\i}nik type have been established for non-convex (or non-concave) fluxes as well as for some systems of conservation laws (see, e.g., \cite{MR1632980,MR818862, MR2405854,  MR1855004,MR2119939}). 

As Lax observed in~\cite{MR66040}, the Ole\u{\i}nik inequality implies  the compactness in $L^1_{\loc}$ of the semigroup $(S_t)_{t >0}$ of entropy weak solutions to strictly convex scalar conservation laws in one space dimension. More recently, quantitative estimates of the compactness of $S_t$ have been established by relying on the notion of Kolmogorov $\eps$-entropy (see~\cite{MR2954617,MR2142881}).

For nonlocal conservation laws, inequalities of the type listed above are not known to date. In this direction, the only result available in the literature is \cite[Theorem 3]{MR4300935}, where an Ole\u{\i}nik-type estimate is obtained under the strong assumptions that the initial datum itself satisfies a one-sided Lipschitz condition and is bounded away from zero; and \cite[Theorem 3.10]{MR4172728} (for a slightly different, but related, class of nonlocal equations, namely nonlocal transport equations), under the rather restrictive assumptions that the initial datum is quasi-concave and has an upper bound on the derivative.

\subsection{Outline}
\label{ssec:outline}
The paper is organized as follows. In Section \ref{sec:main}, we present the statements of our main results, namely, the Ole\u{\i}nik-type inequalities involving $W_\eps$ and $V'(W_\eps) W_\eps \partial_x W_\eps $. 

The proof of these inequalities is contained in Section \ref{sec:proof}. As a byproduct, in Section \ref{sec:proof-limit}, we prove the nonlocal-to-local convergence for initial data in $L^\infty$.  
Finally, in Section~\ref{sec:numerics}, we conclude by presenting some numerical experiments.

\section{Main results}
\label{sec:main}

Our main results are the following Ole\u{\i}nik-type estimates involving the  nonlocal term $W_\eps$. More precisely, under different sets of assumptions on the velocity function $V$, we show that $W_\eps$ satisfies a one-sided Lipschitz condition and that $V'(W_\eps) W_\eps \partial_x W_\eps $ satisfies a one-sided bound, respectively. 

\begin{theorem}[Ole\u{\i}nik-type inequality for $W_\eps$]\label{th:o}
Let $0<\kappa_1<\kappa_2$ and $ \rho_{0}\in L^{\infty}(\R;\R_{\geq0})$ and let $V\in W^{2,\infty}_{\loc}(\R)$ be a nonincreasing velocity function such that at least one of the following conditions is satisfied: 
\begin{align}
\label{ass:linear}&V'(\xi) = - \delta < 0, && \forall \xi \in [    \essinf \rho_{0},    \esssup \rho_{0}]; \\
\label{ass:v-conv-more}&0\le V'(\xi)+V''(\xi)\xi \le \kappa_1, \quad V'(\xi) \le - \kappa_2,  \quad  \kappa_2 - \kappa_1>0, && \forall \xi \in [    \essinf \rho_{0},    \esssup \rho_{0}].
\end{align} 
Let  $\rho_{\e}$ be the solution of the Cauchy problem associated to \eqref{eq:cl}.
Then the nonlocal term $W_\eps$ satisfies the following inequality:
\begin{align}\label{eq:ol}
\frac{W_\eps(t,x)-W_\eps(t,y)}{x-y} \ge - \frac{1}{\kappa t}, \qquad \text{for all $t >0$ and $x,y \in \R$ with $x \neq y$,}
\end{align}
with $\kappa \coloneqq \delta$ (in case assumption \eqref{ass:linear}holds) or $\kappa \coloneqq \kappa_2 - \kappa_1$ (in case assumption \eqref{ass:v-conv-more} holds).
\end{theorem}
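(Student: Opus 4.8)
\medskip
\noindent\emph{Proof strategy.}
By mollifying $\rho_0$ — keeping its range inside $[\essinf\rho_0,\esssup\rho_0]=:[a,b]$ and, say, making $\rho_0$ constant outside a compact set — by the $L^1_{\loc}$-stability of the nonlocal Cauchy problem, and by the fact that \eqref{eq:ol} is stable under $L^1_{\loc}$ limits, one reduces to the case in which $\rho_\eps$ is smooth with enough spatial decay of $\partial_x W_\eps$. Along the characteristics $\dot X=V(W_\eps(t,X))$ of \eqref{eq:cl}, using $\partial_x W_\eps=\tfrac1\eps(W_\eps-\rho_\eps)$ from \eqref{eq:WWxq}, the maximum principle gives $\rho_\eps(t,\cdot)\in[a,b]$ for all $t$, hence also $W_\eps(t,\cdot)\in[a,b]$ (it is an average of $\rho_\eps$). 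Introduce $\mathcal K_\eps[g](x):=\tfrac1\eps\int_x^\infty e^{(x-y)/\eps}g(y)\dd y$, so that $W_\eps=\mathcal K_\eps[\rho_\eps]$, $\partial_x\mathcal K_\eps[g]=\tfrac1\eps(\mathcal K_\eps[g]-g)$ (this is \eqref{eq:WWxq}), and, by integration by parts, $\mathcal K_\eps$ commutes with $\partial_x$ on bounded functions. Since $\rho_\eps=W_\eps-\eps\partial_x W_\eps$, substituting into \eqref{eq:cl} and using these identities yields the \emph{closed} equation
\begin{align}\label{eq:W-plan}
\partial_t W_\eps+V(W_\eps)\partial_x W_\eps=\tfrac1\eps\big(h(W_\eps)-\mathcal K_\eps[h(W_\eps)]\big),\qquad h(\xi):=\int_a^\xi sV'(s)\dd s.
\end{align}
On $[a,b]$ one has $h'(\xi)=\xi V'(\xi)\le 0$; moreover $h''(\xi)=V'(\xi)+\xi V''(\xi)\in[0,\kappa_1]$ under \eqref{ass:v-conv-more}, while $h(\xi)=-\tfrac\delta2\xi^2+\mathrm{const}$ under \eqref{ass:linear}. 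Finally, $\rho_\eps-W_\eps=-\eps\partial_x W_\eps$, so \eqref{eq:ol} is equivalent to the upper bound $v_\eps:=\rho_\eps-W_\eps\le\tfrac\eps{\kappa t}$, which I will prove.

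Differentiating \eqref{eq:W-plan} in $x$, multiplying by $-\eps$, and using $\rho_\eps=v_\eps+W_\eps$ gives
\begin{align}\label{eq:v-plan}
\partial_t v_\eps+V(W_\eps)\partial_x v_\eps=-\tfrac{|V'(W_\eps)|\,\rho_\eps}{\eps}\,v_\eps+\tfrac1\eps\big(\mathcal K_\eps[h(W_\eps)]-h(W_\eps)\big).
\end{align}
The crux is the pointwise inequality: \emph{if $W\colon\R\to[a,b]$ satisfies $\partial_x W\ge-\mu/\eps$ for some $\mu>0$, then for every $x$}
\begin{align}\label{eq:keylemma-plan}
\mathcal K_\eps[h(W)](x)-h(W(x))\ \le\ |h'(W(x))|\,\mu+\kappa_1\mu^2
\end{align}
(\emph{with $\kappa_1\mu^2$ absent under \eqref{ass:linear}}). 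Indeed, the left-hand side equals $\int_0^\infty e^{-s}\big(h(W(x+\eps s))-h(W(x))\big)\dd s$; where $W(x+\eps s)\ge W(x)$ the integrand is $\le 0$ because $h$ is non-increasing on $[a,b]$, and where $W(x+\eps s)<W(x)$ one writes $d:=W(x)-W(x+\eps s)\in(0,\mu s]$ (the bound $d\le\mu s$ being exactly the one-sided Lipschitz hypothesis) and uses Taylor's formula with $0\le h''\le\kappa_1$ to get $h(W(x)-d)-h(W(x))\le|h'(W(x))|\,d+\tfrac12\kappa_1 d^2\le|h'(W(x))|\mu s+\tfrac12\kappa_1\mu^2 s^2$; integrating against $e^{-s}\dd s$ and using $\int_0^\infty se^{-s}\dd s=1$ and $\int_0^\infty s^2e^{-s}\dd s=2$ gives \eqref{eq:keylemma-plan}. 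The essential observation is that the ``bad'' configuration ($W$ increasing to the right of $x$) contributes nothing, so that only a one-sided Lipschitz control on $W$ is needed.

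The conclusion follows by comparison with the barriers $t\mapsto\tfrac{(1+\eta)\eps}{\kappa t}$, $\eta>0$. Since $|v_\eps|\le 2b$ uniformly in $t$ while the barrier blows up as $t\downarrow0$, if $v_\eps\le\tfrac{(1+\eta)\eps}{\kappa t}$ failed there would be a first time $t_*>0$ with $\sup_x v_\eps(t_*,x)=\mu_*:=\tfrac{(1+\eta)\eps}{\kappa t_*}$ and $v_\eps(t,\cdot)\le\tfrac{(1+\eta)\eps}{\kappa t}$ for all $t\le t_*$; in particular $\partial_x W_\eps(t_*,\cdot)\ge-\mu_*/\eps$ on all of $\R$, and the supremum at $t_*$ is attained at some $x^*$, where $\partial_x v_\eps(t_*,x^*)=0$ so the transport term in \eqref{eq:v-plan} drops. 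Writing $W^*:=W_\eps(t_*,x^*)$ and $p^*:=|V'(W^*)|$ (so $\rho_\eps(t_*,x^*)=\mu_*+W^*$ and $|h'(W^*)|=p^*W^*$), and inserting \eqref{eq:keylemma-plan} with $\mu=\mu_*$ into \eqref{eq:v-plan} at $(t_*,x^*)$, one obtains
\[
\partial_t v_\eps(t_*,x^*)\ \le\ -\tfrac{p^*(\mu_*+W^*)}{\eps}\,\mu_*+\tfrac1\eps\big(p^*W^*\mu_*+\kappa_1\mu_*^2\big)=\tfrac{\mu_*^2}{\eps}\,(\kappa_1-p^*)\ \le\ -\tfrac{\kappa\,\mu_*^2}{\eps},
\]
using $p^*\ge\kappa_2=\kappa+\kappa_1$ (under \eqref{ass:linear}, $\kappa_1$ is absent and $p^*=\delta=\kappa$, with the same outcome). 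On the other hand, $\mu_*=\tfrac{(1+\eta)\eps}{\kappa t_*}$ gives $-\tfrac{\kappa\mu_*^2}{\eps}=-\tfrac{(1+\eta)^2\eps}{\kappa t_*^2}$, which is \emph{strictly} smaller than $-\tfrac{(1+\eta)\eps}{\kappa t_*^2}$. But the smooth function $t\mapsto v_\eps(t,x^*)$ lies below $\tfrac{(1+\eta)\eps}{\kappa t}$ on $(0,t_*]$ with equality at $t_*$, so its derivative there is at least that of the barrier, i.e. $\partial_t v_\eps(t_*,x^*)\ge-\tfrac{(1+\eta)\eps}{\kappa t_*^2}$ — a contradiction. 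Hence $v_\eps\le\tfrac{(1+\eta)\eps}{\kappa t}$ for every $\eta>0$; letting $\eta\downarrow0$ gives $\partial_x W_\eps\ge-\tfrac1{\kappa t}$, and integrating between $x$ and $y$ yields \eqref{eq:ol}.

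In my view, the main obstacle is to isolate the reformulation \eqref{eq:W-plan}--\eqref{eq:v-plan} — in particular, to recognize that $\mathcal K_\eps$ commutes with the dynamics, so that the nonlocal term collapses to $\tfrac1\eps(\mathcal K_\eps[h(W_\eps)]-h(W_\eps))$ — and then to prove the pointwise Lemma \eqref{eq:keylemma-plan}: there the sign of $h''$ together with the relation $\kappa_2=\kappa+\kappa_1$ (respectively, the affine structure of $V$) is used sharply, and it is essential that the first-touching-time argument supplies the \emph{global} one-sided bound $\partial_x W_\eps(t_*,\cdot)\ge-\mu_*/\eps$ rather than merely information at the extremal point $x^*$.
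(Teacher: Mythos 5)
Your proof is correct, and the engine is the same as the paper's — control the spatial extremum of $\partial_x W_\eps$ (for you, the maximum of $v_\eps=\rho_\eps-W_\eps=-\eps\partial_x W_\eps$), exploit $0\le V'+\xi V''\le\kappa_1$ together with $V'\le-\kappa_2$ (resp.\ the affine structure in case \eqref{ass:linear}), and land on the Riccati-type bound $\partial_t\le -\kappa(\cdot)^2$ — but the implementation is genuinely different in three respects. First, instead of differentiating \eqref{eq:Wt} in $x$ and integrating by parts in the resulting nonlocal integral as the paper does, you rewrite the source in closed form as $\tfrac1\eps\big(h(W_\eps)-\mathcal{K}_\eps[h(W_\eps)]\big)$ with $h'(\xi)=\xi V'(\xi)$; this is equivalent to \eqref{eq:Wt} after one integration by parts (justified since $h(W_\eps)$ is bounded), and it makes transparent that the assumptions \eqref{ass:linear}/\eqref{ass:v-conv-more} are exactly sign and size conditions on $h''$. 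Second, where the paper estimates the nonlocal term at the minimum point using $\partial_y W_\eps\ge m$ inside the integral, you isolate a standalone pointwise lemma bounding $\mathcal{K}_\eps[h(W)]-h(W)$ under a global one-sided Lipschitz hypothesis via Taylor expansion with $0\le h''\le\kappa_1$ (and concavity of $h$ in the linear case); the cancellation of the first-order terms $p^*W^*\mu_*$ is the same cancellation that occurs in the paper between the boundary term of the integration by parts and the term $\tfrac1\eps V'(W_\eps)W_\eps\partial_xW_\eps$. Third, you close the argument with inflated barriers $(1+\eta)\eps/(\kappa t)$ and a first-touching-time comparison rather than integrating the differential inequality $\frac{\d}{\d t}m(t)\ge\kappa m^2(t)$; this buys you something, since it avoids having to justify the differentiability of $t\mapsto\min_x\partial_xW_\eps(t,x)$, which the paper uses without comment, at the price of a slightly longer conclusion. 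Both arguments share the same technical caveat, treated at the same level of rigor in the paper: after the reduction to smooth data via the stability lemma, one needs the spatial extremum to be attained (equivalently, some decay of $\partial_x W_\eps$ at infinity, or an almost-extremum perturbation argument), a point you at least flag explicitly; also note that your auxiliary claims ($t_*\ge t_0>0$, continuity of $t\mapsto\sup_x v_\eps$, and the global bound $\partial_xW_\eps(t_*,\cdot)\ge-\mu_*/\eps$ feeding the lemma) all rest on this smooth, spatially localized setting and should be stated as part of the reduction.
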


\begin{remark}[Convexity/concavity assumptions] \label{rk:convex}
If we assume that the flux is strictly convex (instead of strictly concave as implied by assumptions \eqref{ass:linear} or \eqref{ass:v-conv-more}), the velocity increasing, and the convolution looking to the left, we can establish analogous results. In particular, for the case of a convex flux with linear velocity  (i.e., the counterpart of the setting of \eqref{ass:linear}), we refer to~\cite{nwave2023}.

Here, we consider the concave case because of its relevance for traffic models (see \cite{MR2328174}). 

 \end{remark}

\begin{theorem}[Ole\u{\i}nik-type inequality for $V'(W_\eps) W_\eps \partial_x W_\eps$] \label{th:og}
Let $0<\kappa_1$ and $ \rho_{0}\in L^{\infty}(\R;\R_{\geq0})$ and let $V\in W^{2,\infty}_{\loc}(\R)$ be a nonincreasing velocity function such that at least one of the following conditions is satisfied:  
\begin{align}
\label{ass:ob2} &0\le (-V'(\xi)-V''(\xi)\xi)
( \esssup \rho_{0} -     \essinf \rho_{0}) \le -V'(\xi)\xi, && \forall \xi \in [    \essinf \rho_{0},    \esssup \rho_{0}]; \\
\label{ass:ob3}
&-V'(\xi) \le V''(\xi)\xi \le -(2- \kappa_1)V'(\xi), && \forall \xi \in [    \essinf \rho_{0},    \esssup \rho_{0}]   . 
		\end{align}
Let  $\rho_{\e}$ be the solution of the Cauchy problem associated to \eqref{eq:cl}. 
Then,
	\be \label{e:claim}
	\sup_{\R}  V'(W_\eps) W_\eps \partial_x W_\eps \le \frac{\|\rho_{0}\|_{L^\infty(\R)}}{\kappa t}, \quad \text{ for all $t>0$,}
	\eq
	where 
 $\kappa:= 1$ (in case  assumption \eqref{ass:ob2} holds) or $\kappa:= \kappa_1$ (in case assumption~\eqref{ass:ob3} holds).
\end{theorem}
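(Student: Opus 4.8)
The plan is to derive a closed evolution equation (in the form of a transport inequality with a favorable sign in the source term) for the quantity $g_\eps := V'(W_\eps) W_\eps \partial_x W_\eps$, and then to close the argument by a comparison/ODE argument that mimics the classical Oleĭnik bound $z'(t) \le C z(t)^2$. The starting point is \eqref{eq:WWxq}, which gives $\partial_x W_\eps = \tfrac 1\eps (W_\eps - \rho_\eps)$, so $W_\eps$ solves a transport equation: differentiating \eqref{eq:W} in $t$ and using \eqref{eq:cl} one finds that $W_\eps$ satisfies $\partial_t W_\eps + V(W_\eps)\partial_x W_\eps = (\text{something involving } W_\eps - \rho_\eps)$, and in fact the natural "velocity field" along which to differentiate is $b_\eps := V(W_\eps)$. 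The key point is that $W_\eps$ is spatially smooth (it is the convolution of an $L^\infty$ function against a Lipschitz kernel), so $g_\eps$ is well-defined pointwise and one may legitimately differentiate.

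First I would compute $\partial_t g_\eps + b_\eps \partial_x g_\eps$ by the product rule, using the transport equation for $W_\eps$ and its $x$-derivative. Each factor $V'(W_\eps)$, $W_\eps$, and $\partial_x W_\eps$ contributes a term; the term coming from $\partial_x W_\eps$ produces, via $\partial_x b_\eps = V'(W_\eps)\partial_x W_\eps$, a quadratic term $-(V'(W_\eps)\partial_x W_\eps)^2 \cdot(\text{sign/coefficient})$ which is the analogue of the $-\kappa z^2$ dissipative term in Oleĭnik's ODE, while the remaining terms are linear in $g_\eps$ with coefficients controlled by the assumptions \eqref{ass:ob2} or \eqref{ass:ob3}. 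This is exactly where the two alternative hypotheses enter: the inequalities $-V'(\xi) \le V''(\xi)\xi \le -(2-\kappa_1)V'(\xi)$ (resp. the condition in \eqref{ass:ob2}) are precisely what is needed to bound the "bad" linear terms by the "good" quadratic term times a constant, so that pointwise
\[
\partial_t g_\eps + b_\eps \partial_x g_\eps \le -\frac{\kappa}{\|\rho_0\|_{L^\infty}}\, g_\eps^2 + (\text{lower order}),
\]
after using the maximum principle $0 \le \rho_\eps \le \|\rho_0\|_{L^\infty}$ (which also gives $0 \le W_\eps \le \|\rho_0\|_{L^\infty}$) to control the algebraic prefactors. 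I expect some care is needed to handle the terms where $\rho_\eps$ (rather than $W_\eps$) appears, i.e. the terms coming from $\partial_x W_\eps = \tfrac1\eps(W_\eps-\rho_\eps)$: one wants these to have the right sign or to be absorbed, and this is presumably why the problem is phrased with $\sup$ rather than two-sided bounds and why the kernel is one-sided.

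Once the differential inequality $\partial_t g_\eps + b_\eps\partial_x g_\eps \le -c\, g_\eps^2$ (with $c = \kappa/\|\rho_0\|_{L^\infty}$, possibly up to harmless lower-order corrections) holds along the flow of $b_\eps$, I would integrate along characteristics: setting $z(t) := g_\eps(t, X(t))$ where $\dot X = b_\eps(t,X(t))$, one gets $z'(t) \le -c z(t)^2$, and a standard comparison with the solution of $\zeta' = -c\zeta^2$ gives $z(t) \le \tfrac{1}{ct}$ for all $t>0$ irrespective of the (possibly $+\infty$, but here finite) initial value — this is the mechanism by which the $L^\infty$ (not BV) initial datum suffices. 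Taking the supremum over characteristics yields \eqref{e:claim}. The main obstacle I anticipate is the algebraic bookkeeping in the first step: correctly identifying which combination of $V'$, $V''$, $W_\eps$, $\rho_\eps$ multiplies $g_\eps^2$ and ensuring its sign, and then verifying that assumptions \eqref{ass:ob2}–\eqref{ass:ob3} are exactly tight enough to dominate the linear-in-$g_\eps$ remainder; a secondary technical point is justifying the characteristic/comparison argument rigorously (the vector field $b_\eps$ is Lipschitz in $x$ since $W_\eps$ is, so the flow is well-defined, and $g_\eps$ is continuous, so this should be routine but must be stated).
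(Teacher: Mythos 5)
Your central claimed step --- a \emph{pointwise} transport inequality $\partial_t g_\eps + V(W_\eps)\partial_x g_\eps \le -c\,g_\eps^2 + (\text{lower order})$ to be integrated along characteristics --- does not hold, and this is a genuine gap rather than bookkeeping. Computing the material derivative of $g_\eps=V'(W_\eps)W_\eps z_\eps$ (with $z_\eps=\partial_x W_\eps$, $h_\eps=V''(W_\eps)W_\eps+V'(W_\eps)$) from \eqref{e:nle*} gives
\[
\partial_t g_\eps + V(W_\eps)\partial_x g_\eps \;=\; -\,h_\eps z_\eps\,(g_\eps\ast\eta_\eps)\;-\;(V'(W_\eps))^2 W_\eps z_\eps^2\;-\;\frac{V'(W_\eps)W_\eps}{\eps}\big(g_\eps\ast\eta_\eps-g_\eps\big).
\]
Only the middle term is the good quadratic one. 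The last term carries a factor $1/\eps$ and has no pointwise sign (one only knows $-V'(W_\eps)W_\eps\ge 0$, while $g_\eps\ast\eta_\eps-g_\eps$ can be positive at a generic point), and the first term is likewise of size $O(1/\eps)$ since the only available bound on $z_\eps$ is $|z_\eps|\le \Osc\rho_\eps/\eps$; neither is ``lower order'', and neither can be absorbed by the quadratic term along an arbitrary characteristic. Consequently the Riccati comparison along characteristics breaks down. The actual proof evaluates the identity at a point $\bar x$ of \emph{global spatial maximum} of $g_\eps(\bar t,\cdot)$: there $g_\eps\ast\eta_\eps(\bar t,\bar x)\le g_\eps(\bar t,\bar x)$ makes the singular $1/\eps$-term nonpositive, $\partial_x g_\eps(\bar t,\bar x)=0$ lets one trade $\partial_x z_\eps$ for $z_\eps^2$, and assumptions \eqref{ass:ob2} (comparing the term $-h_\eps z_\eps\, g_\eps\ast\eta_\eps$ against the singular term via $|z_\eps|\le\Osc\rho_\eps/\eps$) or \eqref{ass:ob3} (using $g_\eps\ast\eta_\eps\le g_\eps$) control the remaining contribution; this yields $\frac{\d}{\d t}\max_x g_\eps \le -\|\rho_0\|_{L^\infty(\R)}^{-1}\,\kappa\,(\max_x g_\eps)^2$ and then \eqref{e:claim} by comparison. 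So the estimate is a maximum-principle/extremal-point argument for $t\mapsto\max_x g_\eps(t,\cdot)$, not a characteristic-wise one; the information ``at the maximum'' is exactly what kills the $O(1/\eps)$ terms and cannot be dispensed with.

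Two further points are missing. First, your regularity claim is not correct for $L^\infty$ data: by \eqref{eq:WWxq}, $\partial_x W_\eps=\frac1\eps(W_\eps-\rho_\eps)$ contains $\rho_\eps$, which is merely bounded, so $g_\eps$ is in general not continuous, a maximizer need not exist, and the classical differentiations are not justified. The proof therefore first approximates $\rho_0$ by data in $\mathcal D\cap C^2(\R)$ and passes to the limit in the inequality using the stability Lemma \ref{theo:stability} (together with Remark \ref{rk:time-c}); some version of this reduction is needed. Second, one must handle the regime in which $g_\eps(t,\cdot)\le 0$ everywhere (no positive maximum): this corresponds to $\rho_\eps(t,\cdot)$ nondecreasing, and since \eqref{eq:cl} preserves monotonicity the bound is trivial from that time onward --- a case distinction your argument does not address. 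Without replacing the claimed pointwise inequality by the evaluation at the spatial maximum and adding these two steps, the proposal does not establish \eqref{e:claim}.
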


\begin{remark}[Independence of the constant on $\mathrm{TV}(\rho_0)$]
In Theorems \ref{th:o} and \ref{th:og}, the initial datum is not required to be of bounded variation. 
\end{remark}

\begin{remark}[Assumptions on the velocity function and traffic models]\label{rk:traffic}
The assumptions on the velocity function $V$  in Theorems~\ref{th:o} and \ref{th:og} may look quite restrictive. In the proofs, we exploit such conditions when manipulating the equations satisfied by $\px W_\eps$ and $V'(W_\eps) W_\eps \partial_x W_\eps$ to deduce a Riccati-type differential inequality. Despite their apparent intricacy, these assumptions are satisfied by several classes of well-known traffic models, possibly under some restrictions on the initial data.

\begin{enumerate}
       \item 	Assumption \eqref{ass:linear} is satisfied by  the  \emph{Greenshield model}, $V(\xi) = v_{\mathrm{max}}(1-\xi/\rho_{\max})$
     (see \cite[Chapter~3, Eq. (3.1.3)]{MR2328174}).

\item 	The \emph{Underwood model} $V(\xi) = v_0 e^{\left( -\frac{\xi}{\rho_{\max}} \right)}$, with $\rho_{\max}>0$ and $v_0 >0$ (see \cite[Chapter 3, Eq.~(3.1.5)]{MR2328174}), satisfies Assumption \eqref{ass:ob2} under the constraint $     \essinf \rho_{0} \ge \frac{3-\sqrt 8}{2}     \esssup \rho_{0}$.

\item The generalized \emph{Greenshield model} $V(\xi)= v_0 \left(1 - \left( \frac{\xi}{\rho_{\max}}\right)^n \right)$, with $\rho_{\max}>0$ and $v_0 >0$  (see \cite[Chapter 3, Eq.~(3.1.6)]{MR2328174}), satisfies Assumption \eqref{ass:ob2} under the constraint $     \essinf \rho_{0} \ge \frac{n}{n+1}     \esssup \rho_{0}$. 
 
\item The \emph{generalized California model} $V_\alpha(\xi) = v_0 \left( \frac{1}{\xi^\alpha} - \frac{1}{\rho_{\max}^\alpha}\right)$, with $\rho_{\max}>0$ and $v_0 >0$ and $\alpha \in (0,1)$ (cf. \cite[Chapter 3, Eq. (3.1.7)]{MR2328174}), satisfies Assumptions \eqref{ass:v-conv-more} and  \eqref{ass:ob3}. This velocity is not locally Lipschitz continuous at $\xi = 0$;
however, its variant  $V_\alpha(\xi) = v_{\max} \left( \frac{1}{\xi^\alpha + \frac{v^\alpha_{\max}}{v^\alpha_{\max} +1}} - \frac{1}{\rho_{\max}^\alpha}\right)$ is and satisfies the same assumption; alternatively, we may just assume $\rho_0 \ge c_0 >0$.
\end{enumerate}
\end{remark}

As a consequence of Theorems~\ref{th:o} and~\ref{th:og}, we deduce the following nonlocal-to-local convergence results. The key difference compared to \cite{2012.13203,2206.03949} is the fact that we do not require the initial datum to have bounded total variation; on the other hand, some extra assumptions on the velocity function are required.

\begin{corollary}[Nonlocal-to-local singular limit problem]\label{T_main} Let us suppose that either 
\begin{itemize}
    \item[--] the assumptions of Theorem \ref{th:o} hold; 
    \item[--]  the assumptions of Theorem \ref{th:og} hold, and additionally $V' \le -\kappa_2 <0$ for some $\kappa_2 >0$. 
\end{itemize}
Let $\rho_{\e}$ be the unique weak solution of the nonlocal conservation law \eqref{eq:cl} and $\rho$ be the unique entropy admissible solution of the local conservation law~\eqref{eq:cll}. Then, both $\rho_{\e}$ and the corresponding nonlocal term $W_\eps$ converge to $\rho$ in $L^1_{\loc}([0,T)\times \R)$. 
\end{corollary}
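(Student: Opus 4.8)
The plan is to follow the classical Ole\u{\i}nik--Lax program: the uniform maximum principle together with the one-sided estimate of Theorem~\ref{th:o} or Theorem~\ref{th:og} yields precompactness of $\{W_\eps\}$ — and then of $\{\rho_\eps\}$ — in $L^1_\loc$, while the same one-sided estimate, passed to the limit, forces every accumulation point to be the unique entropy-admissible solution of \eqref{eq:cll}. Write $M:=\|\rho_0\|_{L^\infty(\R)}$ and $f(\xi):=V(\xi)\xi$; a preliminary remark is that the limit flux $f$ is \emph{uniformly concave} under all the hypotheses at hand, since $f''(\xi)=2V'(\xi)+V''(\xi)\xi$ is bounded away from $0$ from above on $[\essinf\rho_0,\esssup\rho_0]$ under each of \eqref{ass:linear} and \eqref{ass:v-conv-more}, and — using the extra assumption $V'\le-\kappa_2<0$ — also under \eqref{ass:ob2} and \eqref{ass:ob3}.

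First I would record a uniform Ole\u{\i}nik bound for $W_\eps$ itself. By the maximum principle (which is in any case needed for Theorems~\ref{th:o}--\ref{th:og}), $0\le\rho_\eps\le M$, hence $0\le W_\eps\le M$, uniformly in $\eps$. If the hypotheses of Theorem~\ref{th:o} hold, \eqref{eq:ol} is already such a bound: $\partial_x W_\eps\ge-1/(\kappa t)$. If the hypotheses of Theorem~\ref{th:og} hold, then the extra assumption $V'\le-\kappa_2<0$ together with the requirement that \eqref{ass:ob2} or \eqref{ass:ob3} hold at the endpoint $\xi=\essinf\rho_0$ forces $\essinf\rho_0=:c_0>0$; since $W_\eps(t,x)=\frac1\eps\int_x^\infty e^{(x-y)/\eps}\rho_\eps(t,y)\,\d y$ is an average of $\rho_\eps$ against a probability density, this gives $W_\eps\ge c_0$, hence $V'(W_\eps)W_\eps\le-\kappa_2 c_0<0$, so that \eqref{e:claim} turns into $\partial_x W_\eps\ge-M/(\kappa\kappa_2 c_0\,t)$. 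In all cases there is $C=C(\rho_0,V)>0$, independent of $\eps$, with $\partial_x W_\eps(t,\cdot)\ge-C/t$; together with $0\le W_\eps\le M$ this yields, for every compact $K\subset\R$, the uniform bound $\TV_K(W_\eps(t,\cdot))\le 2M+2C|K|/t$.

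Next I would extract compactness. From \eqref{eq:cl} and \eqref{eq:WWxq} one checks that $W_\eps$ solves a conservative equation $\partial_t W_\eps+\partial_x Z_\eps=0$, where $Z_\eps$ is the analogue of $W_\eps$ with $\rho_\eps$ replaced by the flux $V(W_\eps)\rho_\eps$; in particular $\|Z_\eps\|_{L^\infty}\le\|V\|_{L^\infty([0,M])}M$ uniformly, so $\partial_t W_\eps$ is bounded in $L^\infty\big((0,T);W^{-1,\infty}_\loc(\R)\big)$ uniformly in $\eps$. Combined with the spatial $\mathrm{BV}$ bound of the previous step (valid for $t\ge\tau>0$), the Aubin--Lions--Simon lemma — via the compact embedding $(\mathrm{BV}\cap L^\infty)_\loc\hookrightarrow\hookrightarrow L^1_\loc\hookrightarrow W^{-1,\infty}_\loc$ — gives relative compactness of $\{W_\eps\}$ in $L^1_\loc([\tau,T)\times\R)$, and a diagonal argument over $\tau\downarrow0$ produces a subsequence with $W_\eps\to w$ in $L^1_\loc((0,T)\times\R)$ and a.e.; the convergence reaches $t=0$ because $W_\eps(0,\cdot)\to\rho_0$ in $L^1_\loc$ and the initial datum is attained uniformly in $\eps$. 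Since $W_\eps(t,\cdot)$ is Lipschitz for fixed $\eps$, \eqref{eq:WWxq} gives $\rho_\eps-W_\eps=-\eps\,\partial_x W_\eps$, whence
\[
\|\rho_\eps-W_\eps\|_{L^1([\tau,T)\times K)}=\eps\!\int_\tau^{T}\!\!\int_K|\partial_x W_\eps|\,\d x\,\d t\le\eps\,(T-\tau)\Big(2M+\frac{2C|K|}{\tau}\Big)\xrightarrow[\eps\to0]{}0,
\]
so that $\rho_\eps\to w$ in $L^1_\loc$ as well.

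It remains to identify $w$. Since $W_\eps\to w$ a.e.\ and is bounded, continuity of $V$ gives $V(W_\eps)\to V(w)$ boundedly a.e., so $V(W_\eps)\rho_\eps\to f(w)$ in $L^1_\loc$; passing to the limit in \eqref{eq:cl} shows that $w$ is a weak solution of \eqref{eq:cll} with $w(0,\cdot)=\rho_0$. Passing the one-sided bound of the first step to the a.e.\ limit shows that, for a.e.\ $t$, the map $x\mapsto w(t,x)+Cx/t$ is nondecreasing, i.e.\ $w$ satisfies the Ole\u{\i}nik one-sided Lipschitz condition (the precise constant being immaterial); since $f$ is uniformly concave, this condition characterizes the unique entropy-admissible solution of \eqref{eq:cll}, so $w=\rho$. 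The limit being independent of the subsequence, the full families $\rho_\eps$ and $W_\eps$ converge to $\rho$ in $L^1_\loc([0,T)\times\R)$. I expect the compactness step to be the main obstacle: the time-regularity of $W_\eps$ is available only in a negative norm, so the spatial $\mathrm{BV}$ bound must be exploited through the conservative structure and an Aubin--Lions-type argument, and one must also keep careful track — especially as $t\to0^+$ — of the transfer between $W_\eps$ (for which the Ole\u{\i}nik estimate is stated) and $\rho_\eps$ (for which the equation is written), showing that the two share the same limit.
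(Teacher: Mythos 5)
Your proposal proves the corollary as literally stated, and for the first bullet it follows the paper's own path (one-sided bound \eqref{eq:ol} $\Rightarrow$ uniform local BV bound as in Lemma \ref{cor:bv} $\Rightarrow$ compactness $\Rightarrow$ pass the one-sided condition to the limit). For the second bullet, however, your route is genuinely different from the paper's. The paper stresses that \eqref{e:claim} does \emph{not} give a BV bound on $W_\eps$ itself when the density may vanish, and therefore proves equi-boundedness of $W_\eps^2$ in $\BV_{\loc}$ (Lemma \ref{l:weak}), obtains entropy admissibility of the limit through the kinetic formulation and the Besov regularity $B^{1/3,3}_{\infty,\loc}$ (Lemma \ref{l:strong}), and needs a separate flux-balance argument on the set $\{W=0\}$ to show that $\rho_\eps$ converges to the same limit (Lemma \ref{l:u}). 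You bypass all of this by observing that \eqref{ass:ob2} or \eqref{ass:ob3}, taken near $\xi=\essinf\rho_0$ (in the a.e./limiting sense, using $V\in W^{2,\infty}_{\loc}$ so that $V'$ is continuous and $V''\xi\to0$), are incompatible with $V'\le-\kappa_2<0$ unless $\essinf\rho_0>0$ --- except, under \eqref{ass:ob2}, for the trivial case $\rho_0\equiv 0$, which you should mention. Then $W_\eps\ge c_0>0$, \eqref{e:claim} upgrades to a genuine one-sided Lipschitz bound $\partial_x W_\eps\ge -M/(\kappa\kappa_2 c_0 t)$, and you are back in the BV framework of the first bullet; this is exactly the situation of the paper's Remark on the effect of a lower bound on the density, promoted by your (correct) observation that the corollary's hypotheses force that lower bound. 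What the paper's longer route buys is a proof that uses only \eqref{e:claim} and $V'\le-\kappa_2$, tolerates vanishing density (cf.\ the Greenberg model of Remark \ref{rk:greenb}), and is of independent interest; what your route buys is a much shorter proof of the corollary itself.

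Two steps deserve more care, though neither is a fatal gap. The compactness argument can simply be quoted (as the paper does, via \cite{2012.13203}) instead of an informal Aubin--Lions argument in local spaces; the standard route is a time-translation estimate from $\partial_t W_\eps=-\partial_x Z_\eps$ with $Z_\eps$ uniformly bounded, combined with the spatial BV bound for $t\ge\tau$. More importantly, the assertion that ``the initial datum is attained uniformly in $\eps$'' is not justified, since your BV bound degenerates like $1/t$; it is also not needed. The uniform $L^\infty$ bound makes the strip $(0,\tau)\times K$ negligible in $L^1$, so convergence in $L^1_{\loc}([0,T)\times\R)$ follows from convergence away from $t=0$; and the identification $w=\rho$ should be closed either by invoking Ole\u{\i}nik's uniqueness theorem for weak solutions satisfying the one-sided condition with weakly attained datum, or, as in Step 1 of the paper's proof, by passing to the limit in the entropy inequalities on $(1/n,T)\times\R$ and absorbing the initial layer through the weak-$*$ limits $\bar\eta,\bar q$.
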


Before diving into the proof of our main results, let us recall the following well-posedness result and some fundamental properties of the nonlocal conservation law \eqref{eq:cl}. In particular, we remark that the nonlocal term $W_\eps$ has additional regularity and satisfies a local transport equation with nonlocal source.  We refer to \cite[Theorem 2.1 \& Lemma 3.1]{2012.13203} (which, in turn, relies in part on \cite[Theorem 2.20 \& Theorem~3.2 \& Corollary 4.3]{MR3670045} or \cite[Theorem 2.1 \& Corollary 2.1]{MR4502637}), \cite[Theorem 2.1]{MR3461737},  \cite[Proposition 2.1 \& Corollary 2.2]{2206.03949}, or \cite{nwave2023} for the proof of a similar statement.

\begin{theorem}[Existence and uniqueness of weak solutions, maximum principle, and properties of the nonlocal term]\label{th:wp}
Let $ \rho_{0}\in L^{\infty}(\R;\R_{\geq0}) $ and let $V\in W^{2,\infty}_{\loc}(\R)$ be a non-increasing velocity function. Then, for every~$\eps>0$, there is a unique weak solution \(\rho_\eps\in C\big([0,T];L^{1}_{\loc}(\R)\big)\cap L^{\infty}((0,T);L^{\infty}(\R))\) of the nonlocal conservation law \eqref{eq:cl}. Also, the maximum principle holds:
\begin{align}\label{eq:uniform_bound}
    \essinf_{x\in\R} \rho_{0}(x)\leq \rho_\eps(t,x)\leq \esssup_{x \in \R} \rho_0(x), \quad  \text{ for a.e. } (t,x)\in (0,T)\times\R .
\end{align}

Moreover, the nonlocal term $W_{\eps}$ satisfies the following properties:
\begin{enumerate}
    \item $W_{\eps} \in W^{1,\infty}\left([0,T] \times \mathbb{R}\right)$ and
$\essinf \rho_{0} \le W_\eps \le \esssup \rho_0$;
    \item $W_{\eps} \in C^{0}\left([0,T]; L^{1}_{\mathrm{loc}}(\mathbb{R})\right)$;
    \item if $\rho_{0} \in C^k(\mathbb{R})$, then $W_{\eps} \in C^{k+1}\left([0,T] \times \mathbb{R}\right)$ for $k \ge 0$.
\end{enumerate} 
In addition, for every $t \in [0, T]$,  the map $t\mapsto \Lip^-(\rho_{\eps}(t,\cdot))$  is a locally Lipschitz continuous function from $[0, + \infty)$ to $[0,+\infty)$.  Here, $\mathrm{Lip}^-(\rho_{\eps}) := - \inf\limits_{x<y} \frac{\rho_{\eps}(y) - \rho_{\eps}(x)}{y-x}.$
Furthermore, $W_\eps$  satisfies the following transport equation almost everywhere:  
\begin{align}\label{eq:Wt}
\begin{cases}
\partial_t W_\eps(t,x)+V(W_\eps(t,x))\partial_x W_\eps(t,x) & {}\\
   \qquad =-\frac{1}{\eps}\int_{x}^{\infty}\exp(\frac{x-y}{\eps})V'(W_\eps(t,y))\partial_y W_\eps(t,y)W_\eps(t,y)\dd y,  & (t,x)\in(0,T)\times\R,\\
   W_{\eps}(0,x)=\frac{1}{\eps}\int_{x}^{\infty}\exp(\frac{x-y}{\eps})\rho_{0}(y)\dd y, & x\in\R.
   \end{cases}
\end{align}
We remark that \eqref{eq:Wt} can be equivalently rewritten as 
\begin{equation}\label{e:nle*}
\partial_t W_\eps + \partial_x(V(W_\eps) W_\eps )= g_\eps- g_\eps\ast \eta_\eps, \qquad \mbox{provided } \ g_\eps= V'(W_\eps) W_\eps \partial_x W_\eps,  
\end{equation}
and we use the notation
\begin{equation}
    \label{e:defetaeps}
\eta(\cdot) \coloneqq \mathds{1}_{(-\infty,0]}(\cdot)\exp(\cdot), \quad  \eta_\eps \coloneqq \eps^{-1}\eta(\cdot/\eps).
\end{equation}
\end{theorem}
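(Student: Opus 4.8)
The plan is to obtain Theorem~\ref{th:wp} by a Banach fixed-point argument for the nonlocal term, followed by an analysis of the characteristic flow, and to reduce all the regularity statements to the case of smooth data. First I would record the elementary but crucial observation that, for any $\rho\in L^\infty((0,T)\times\R;\R_{\ge0})$, the quantity $W_\eps[\rho]$ defined in \eqref{eq:W} is a convex average of $\rho$ against the probability density $y\mapsto\eps^{-1}\exp((x-y)/\eps)\mathds 1_{\{y\ge x\}}$, so that $\essinf\rho\le W_\eps[\rho]\le\esssup\rho$, and, by \eqref{eq:WWxq}, $\partial_x W_\eps[\rho]=\eps^{-1}(W_\eps[\rho]-\rho)\in L^\infty$, with $\rho=W_\eps[\rho]-\eps\partial_x W_\eps[\rho]$; in particular $x\mapsto V(W_\eps[\rho](t,x))$ is Lipschitz as soon as the range of $\rho$ stays in a compact set where $V\in W^{2,\infty}$. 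Fixing $w\in C([0,\tau];L^1_{\loc}(\R))$ with $\essinf\rho_0\le w\le\esssup\rho_0$, the linear continuity equation $\partial_t\rho+\partial_x(V(w)\rho)=0$ with datum $\rho_0$ then has a unique nonnegative bounded weak solution $\rho=\Sigma(w)$, obtained by pushing $\rho_0$ forward along the (bi-Lipschitz) characteristic flow of $V(w(t,\cdot))$; set $\Phi(w):=W_\eps[\Sigma(w)]$. Using the stability of continuity equations with respect to the advecting velocity (a Grönwall estimate on the flow maps), the finite speed of propagation (the velocities are bounded on the relevant compact range), and the boundedness of the convolution operator $h\mapsto\eta_\eps\ast h$ on $L^1_{\loc}$, one checks that $\Phi$ is a contraction on $C([0,\tau];L^1_{\loc})$ for $\tau$ small, depending only on $\eps$ and the $L^\infty$-bounds; its fixed point is the desired $\rho_\eps$. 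Conversely, any weak solution $\rho$ has $W_\eps[\rho]$ Lipschitz in $x$, hence equals $\Sigma(W_\eps[\rho])$, so $W_\eps[\rho]$ is a fixed point of $\Phi$; uniqueness of the fixed point yields uniqueness of the weak solution, and since the a priori bound \eqref{eq:uniform_bound} is uniform in time, $\tau$ is uniform and the solution extends to $[0,T]$. This is the strategy of \cite{MR3670045,MR4502637,2012.13203}.

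For the maximum principle \eqref{eq:uniform_bound} I would reduce, by mollifying $\rho_0$ (mollification preserves the bounds $\essinf\rho_0\le\cdot\le\esssup\rho_0$) and using the stability just mentioned, to the case where $\rho_\eps$ and $W_\eps$ are smooth, so characteristics are classical. Along a characteristic $t\mapsto X(t)$ with $\dot X=V(W_\eps(t,X))$ one computes, using \eqref{eq:WWxq},
\[
\frac{\d}{\d t}\,\rho_\eps(t,X(t))=-\rho_\eps\,\partial_x\big(V(W_\eps)\big)=-\tfrac1\eps\,V'(W_\eps)\,\rho_\eps\,\big(W_\eps-\rho_\eps\big).
\]
Since $V$ is nonincreasing, the prefactor $-\eps^{-1}V'(W_\eps)$ is nonnegative, so this is a logistic-type ODE. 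At a spatial maximum of $\rho_\eps(t,\cdot)$ one has $W_\eps\le\rho_\eps$ (being a spatial average of $\rho_\eps$), hence the right-hand side is $\le0$; at a spatial minimum one has $W_\eps\ge\rho_\eps\ge0$, hence it is $\ge0$. A standard barrier/first-crossing argument then shows that $t\mapsto\esssup_x\rho_\eps(t,\cdot)$ is nonincreasing and $t\mapsto\essinf_x\rho_\eps(t,\cdot)$ nondecreasing, which is \eqref{eq:uniform_bound}.

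The properties of $W_\eps$ then follow. Item (1) is immediate from the first paragraph and \eqref{eq:uniform_bound} (for $\partial_t W_\eps\in L^\infty$, differentiate $W_\eps=\eta_\eps\ast\rho_\eps$ in $t$ and use \eqref{eq:cl}); item (2) from $\rho_\eps\in C([0,T];L^1_{\loc})$ and continuity of $h\mapsto\eta_\eps\ast h$; item (3) by bootstrapping --- $W_\eps\in C^1$ makes the flow $C^1$, hence $\rho_\eps\in C^1$ and $W_\eps\in C^2$, and so on --- together with the one-derivative gain of the convolution. The transport equation \eqref{eq:Wt} is obtained by writing $\partial_t W_\eps=\eta_\eps\ast\partial_t\rho_\eps=-\eta_\eps\ast\partial_x(V(W_\eps)\rho_\eps)$ and integrating by parts so as to move the $x$-derivative onto the exponential kernel, using \eqref{eq:WWxq} and $\rho_\eps=W_\eps-\eps\partial_x W_\eps$; the equivalent form \eqref{e:nle*} follows by adding and subtracting $g_\eps=V'(W_\eps)W_\eps\partial_x W_\eps$ to turn $V(W_\eps)\partial_x W_\eps$ into $\partial_x(V(W_\eps)W_\eps)$ and recognizing the integral term in \eqref{eq:Wt} as $-\eta_\eps\ast g_\eps$, with $\eta_\eps$ as in \eqref{e:defetaeps}. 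Finally, the local Lipschitz continuity of $t\mapsto\Lip^-(\rho_\eps(t,\cdot))$ I would read off the representation $\rho_\eps(t,X(t,\xi))=\rho_0(\xi)/\Jac(t,\xi)$, where $\Jac=\partial_\xi X$ solves $\dot\Jac=\partial_x(V(W_\eps))(t,X)\,\Jac$, combined with a Grönwall estimate controlling the time-variation of one-sided incremental ratios of $\rho_\eps$.

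Since this statement essentially packages facts already available in the literature, the only genuinely delicate points are: (a) the \emph{sharp} two-sided bound \eqref{eq:uniform_bound}, which truly uses the logistic structure above together with the averaging property of the exponential kernel, rather than a crude exponential Grönwall estimate; and (b) making the formal manipulations leading to \eqref{eq:Wt}--\eqref{e:nle*} and to the $\Lip^-$-continuity rigorous at the level of weak solutions, which I would handle by proving everything first for smooth data and then passing to the limit via the stability of the fixed-point construction and the uniform bounds. I will therefore present the argument by reduction to the smooth case, referring to \cite{MR3670045,2012.13203,2206.03949,nwave2023} for the remaining routine details.
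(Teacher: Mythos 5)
The paper does not actually prove Theorem~\ref{th:wp}: it is stated as a known well-posedness package, with the proof deferred to \cite[Theorem 2.1 \& Lemma 3.1]{2012.13203}, \cite{MR3670045,MR4502637,MR3461737,2206.03949,nwave2023}. Your plan --- a fixed-point argument for the nonlocal term coupled with the characteristic flow of the frozen-coefficient continuity equation, the reduction to smooth data, the logistic identity $\tfrac{\d}{\d t}\rho_\eps(t,X(t))=-\eps^{-1}V'(W_\eps)\,\rho_\eps\,(W_\eps-\rho_\eps)$ along characteristics for \eqref{eq:uniform_bound}, and the kernel identities \eqref{eq:WWxq}, $\rho_\eps=W_\eps-\eps\partial_x W_\eps$, $\partial_x\eta_\eps=\eps^{-1}(\eta_\eps-\delta_0)$ to derive \eqref{eq:Wt} and its equivalent form \eqref{e:nle*} --- is precisely the strategy of those references, and the algebra you indicate for \eqref{eq:Wt}--\eqref{e:nle*} does check out.

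Two points in your fixed-point setup need repair before the argument closes. First, the map $\Sigma$ is not defined on the set you iterate over: for a generic $w\in C([0,\tau];L^1_{\loc}(\R))$ satisfying only $\essinf\rho_0\le w\le\esssup\rho_0$, the velocity $V(w(t,\cdot))$ is merely bounded, so there is no bi-Lipschitz characteristic flow and no uniqueness for the linear problem. The iteration must be run on a set that also carries a uniform spatial Lipschitz bound (of order $\eps^{-1}$ times the oscillation), as in \cite{MR3670045,MR4502637}, which the image $\Phi(w)=W_\eps[\Sigma(w)]$ does enjoy via \eqref{eq:WWxq}; the contraction in the Fr\'echet-type $L^1_{\loc}$ topology then needs the usual localization to cones of dependence. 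Second, the invariance $\essinf\rho_0\le\Phi(w)\le\esssup\rho_0$ is not automatic: the \emph{continuity} equation has no maximum principle (the solution is $\rho_0$ divided by the Jacobian of the flow), so $\Sigma(w)$, and hence $\Phi(w)$, may exceed $\esssup\rho_0$ on $[0,\tau]$. For the local-in-time contraction one should work with the weaker bound $0\le\Sigma(w)\le\esssup\rho_0\,e^{C\tau/\eps}$ and recover the sharp two-sided bound \eqref{eq:uniform_bound} only for the fixed point itself, exactly through the characteristic/logistic argument you give (which genuinely uses the coupling $W_\eps=\eta_\eps\ast\rho_\eps$); there one must also handle non-attainment of the supremum on $\R$ (Hamilton-type differentiation of $t\mapsto\sup_x$, or an $\epsilon$-room first-crossing argument for smooth approximations). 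Finally, note that $\Lip^-(\rho_\eps(t,\cdot))$ is finite, and its asserted local Lipschitz continuity in $t$ meaningful, only after the reduction to one-sided Lipschitz (e.g.\ smooth) data, which is also how the paper uses this property in the proof of Theorem~\ref{th:og}.
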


\section{Proof of the Ole\u{\i}nik estimates}
\label{sec:proof}

In order to prove the Ole\u{\i}nik estimates, it is helpful to regularize the initial data of the nonlocal conservation law \eqref{eq:cl}. To this end, we need the following stability result (see \cite[Theorem 3.1]{2012.13203} and \cite{nwave2023} for related results).

\begin{lemma}[Approximation]\label{theo:stability}
Let us consider the Cauchy problem 
\begin{align}\label{eq:cp}
\begin{cases}
\partial_{t} \rho(t,x)+\partial_{x}(V(W[\rho](t,x)) \rho(t,x))=0, & (t,x) \in (0,+\infty) \times \R,  \\
\rho(0, x)=\rho_{0}(x), & x \in \R,
\end{cases}
\end{align}
where $$W[\rho](t, x)\coloneqq\int_{x}^{+\infty} \exp(x-y) \rho(t, y) \dd y, \qquad (t,x)\in(0,\infty)\times\R.$$ 
Let us also consider the family of the Cauchy problems
\begin{align}\label{eq:cpn}
\begin{cases}
\partial_{t} \rho_{n}(t,x)+\partial_{x}\left(V\left(W_{n}(t,x)\right) \rho_{n}(t,x)\right)=0, & (t,x) \in (0,+\infty) \times \R, \\
\rho_{n}(0, x)=\rho_{0, n}(x), & x \in\R,
\end{cases}
\end{align}
where  $n \in \N$ and  
$$W_{n}[\rho_n](t, x):=\int_{x}^{+\infty} \exp(x-y) \rho_{n}(t, y) \dd y.
$$
Let us furthermore assume that, for a suitable constant $M>0$, it holds  
\begin{align}\label{assl:linfty}
0 \leq \rho_{0, n} \leq M \text { a.e.  for every $n$, } \qquad \rho_{0, n} \weakstar \rho_{0} \text { weakly-*  in } L^{\infty}(\mathbb{R}) \text{ for } n\rightarrow\infty .
\end{align}
Then, 
$$
W_{n} \rightarrow W \quad \text { strongly in } L_{\mathrm{loc }}^{1}\left(\mathbb{R}_{+} \times \mathbb{R}\right).
$$
\end{lemma}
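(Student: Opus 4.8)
The plan is to obtain compactness of the family $(W_n)$ from uniform regularity estimates, and then to identify the limit by passing to the limit in the weak formulation of \eqref{eq:cpn}, crucially exploiting that the nonlinearity $V$ is composed with the \emph{strongly} convergent quantity $W_n$ while $\rho_n$ itself converges only weakly-$*$.

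First I would record the uniform a priori bounds. Applying Theorem \ref{th:wp} (with $\eps=1$) to each datum $\rho_{0,n}$ gives the unique weak solution $\rho_n$ of \eqref{eq:cpn} with $0\le\rho_n\le M$ a.e. by the maximum principle \eqref{eq:uniform_bound}, together with $W_n\in W^{1,\infty}$ and $0\le W_n\le M$. Since $V\in W^{2,\infty}_{\loc}(\R)$ and all quantities take values in $[0,M]$, the identity \eqref{eq:WWxq} bounds $\|\partial_x W_n\|_{L^\infty}$ by $2M$, and the transport equation \eqref{eq:Wt} (whose nonlocal source has $L^\infty$-norm controlled via $\tfrac{1}{\eps}\int_x^\infty e^{(x-y)/\eps}\dd y=1$) bounds $\|\partial_t W_n\|_{L^\infty}$ in terms of $M$ and $\|V\|_{W^{1,\infty}([0,M])}$; both bounds are uniform in $n$ and in time. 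Hence $(W_n)$ is uniformly bounded and uniformly Lipschitz on every compact subset of $\R_+\times\R$.

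Next, by the Arzelà--Ascoli theorem (on each set $[0,R]\times[-R,R]$, together with a diagonal argument) I would extract a subsequence with $W_n\to\bar W$ locally uniformly, $\bar W$ Lipschitz and $0\le\bar W\le M$; and, since $0\le\rho_n\le M$, a further subsequence with $\rho_n\weakstar\bar\rho$ in $L^\infty(\R_+\times\R)$, $0\le\bar\rho\le M$. Passing to the limit in \eqref{eq:WWxq} (the uniform Lipschitz bound gives $\partial_x W_n\weakstar\partial_x\bar W$, and $W_n\to\bar W$ strongly) yields $\partial_x\bar W=\bar W-\bar\rho$ a.e. Then I would pass to the limit in the weak formulation of \eqref{eq:cpn}: the flux term $\iint V(W_n)\rho_n\,\partial_x\phi$ converges to $\iint V(\bar W)\bar\rho\,\partial_x\phi$ because $V(W_n)\partial_x\phi\to V(\bar W)\partial_x\phi$ strongly in $L^1$ (continuity of $V$, local uniform convergence of $W_n$) while $\rho_n\weakstar\bar\rho$, and the remaining terms pass to the limit using $\rho_n\weakstar\bar\rho$ and the hypothesis $\rho_{0,n}\weakstar\rho_0$ from \eqref{assl:linfty}; thus $\bar\rho$ is a weak solution of $\partial_t\bar\rho+\partial_x(V(\bar W)\bar\rho)=0$ with datum $\rho_0$. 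Integrating the ODE $\partial_x\bar W(t,\cdot)=\bar W(t,\cdot)-\bar\rho(t,\cdot)$ from $x$ to $x'$ and letting $x'\to+\infty$ (the boundary term $e^{x-x'}\bar W(t,x')$ vanishes thanks to $0\le\bar W\le M$) gives $\bar W(t,x)=\int_x^{+\infty}e^{x-y}\bar\rho(t,y)\dd y=W[\bar\rho](t,x)$. Hence $(\bar\rho,\bar W)$ solves \eqref{eq:cp}, so by the uniqueness part of Theorem \ref{th:wp} we have $\bar\rho=\rho$ and $\bar W=W$. Since the limit is independent of the subsequence, the whole sequence $W_n$ converges to $W$ locally uniformly, hence in $L^1_{\loc}(\R_+\times\R)$.

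The main obstacle I anticipate is the compatibility of the two modes of convergence in the identification step: the product $V(W_n)\rho_n$ can be passed to the limit only because $V$ is evaluated at the strongly convergent $W_n$, and this strong (in fact local uniform) convergence hinges on the uniform-in-$n$ Lipschitz estimate, which is precisely where the structural relations \eqref{eq:WWxq}--\eqref{eq:Wt} and the uniform $L^\infty$ bound do the real work. A secondary, more technical point is recovering the integral representation $\bar W=W[\bar\rho]$ as $x\to+\infty$, which again relies on the uniform bound on $W_n$; note in particular that no bound on the total variation of $\rho_{0,n}$ -- and hence no $\mathrm{BV}$ bound on $\rho_n$ -- is available or needed.
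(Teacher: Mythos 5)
Your proof is correct, but it reaches the compactness of $(W_n)$ by a different mechanism than the paper. The paper's proof never differentiates anything: it uses only the maximum principle ($0\le \rho_n, W_n\le M$), extracts a weak-* limit $\rho_n \weakstar v$ in $L^\infty(\R_+\times\R)$, and then observes that, since $W_n(t,\cdot)=\rho_n(t,\cdot)\ast\eta$ with the fixed $L^1$ kernel $\eta(\cdot)=\mathds{1}_{(-\infty,0]}(\cdot)\exp(\cdot)$, the weak-* convergence of $\rho_n$ upgrades (via dominated convergence, i.e.\ the equi-integrability/equicontinuity coming from convolution with a fixed $L^1$ kernel) to strong $L^1_{\loc}$ convergence of $W_n$, with limit automatically equal to $v\ast\eta=W[v]$; the identification $v=\rho$ is then done exactly as you do, by passing to the limit in the distributional formulation (strong convergence of $V(W_n)$ paired with weak-* convergence of $\rho_n$) and invoking uniqueness of the bounded solution of \eqref{eq:cp}. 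You instead derive uniform space-time Lipschitz bounds on $W_n$ from \eqref{eq:WWxq} and \eqref{eq:Wt}, apply Arzel\`a--Ascoli, and then must recover the constitutive relation $\bar W=W[\bar\rho]$ a posteriori by passing to the limit in \eqref{eq:WWxq} and integrating the resulting ODE in $x$. What your route buys: locally uniform (not merely $L^1_{\loc}$) convergence of $W_n$ and an explicit treatment of the time equicontinuity, which the paper's one-line dominated-convergence step leaves implicit. What it costs: you need the $W^{1,\infty}$ regularity of $W_n$ and the a.e.\ validity of \eqref{eq:Wt} from Theorem \ref{th:wp}, and your estimates are tied to the exponential kernel through \eqref{eq:WWxq}--\eqref{eq:Wt}, whereas the paper's convolution argument extends verbatim to the more general kernels of Remark \ref{rk:stability-eta}. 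One small point, common to both arguments: the limit $\bar\rho$ is a priori only a bounded distributional solution, so to apply the uniqueness statement of Theorem \ref{th:wp} (formulated in $C([0,T];L^1_{\loc})\cap L^\infty$) one should normalize the representative in time as in Remark \ref{rk:time-c}; this matches the paper's own appeal to ``the unique bounded distributional solution'' and is not a genuine gap.
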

\begin{remark}[More general kernels]\label{rk:stability-eta}
The statement of Lemma \ref{theo:stability} is still valid if we replace the exponential weight with a more general kernel
$$
\eta \in \operatorname{Lip}\left(\mathbb{R}_{-}\right), \quad \int_{\mathbb{R}_{-}} \eta(y) \dd y=1, \quad \eta^{\prime} \geq 0.
$$
\end{remark}

\begin{proof}[Proof of Lemma~\ref{theo:stability}] By the maximum principle, the first condition in \eqref{assl:linfty} yields
\begin{align}\label{eq:04}
0 \leq \rho_{n}, W_{n} \leq M \text { a.e. and for every  $n$.}
\end{align}
Owing to \eqref{eq:04}, we have that, up to subsequences, $\rho_{n} \weakstar v$ in the weak-* topology of $L^{\infty}\left(\mathbb{R}_{+} \times \mathbb{R}\right)$, for some bounded limit function $v$. By Lebesgue's Dominated Convergence Theorem, this, in turn, implies that $W_{n} \rightarrow v \ast \mathds{1}_{(-\infty,0]}(\cdot)\exp(\cdot)$ strongly in $L_{\mathrm{loc}}^{1}\left(\mathbb{R}_{+} \times \mathbb{R}\right)$. By passing to the limit in the distributional formulation of \eqref{eq:cpn}, we conclude that $v$ coincides with the unique bounded distributional solution of \eqref{eq:cp}. This concludes the proof of the lemma.
\end{proof}

\begin{remark}[Continuity in time]\label{rk:time-c}
By using \cite[Lemma 1.3.3]{MR3468916}, we can assume -- with no loss of generality -- that the functions $t \mapsto \rho(t, \cdot)$ and $t \mapsto W(t, \cdot)$ are continuous from $\mathbb{R}_{+}$ to $L^{\infty}(\mathbb{R})$ endowed with the $L^{\infty}$-weak-* and the strong $L_{\mathrm{loc}}^{1}$ topology, respectively. In Section \ref{sec:proof-limit}, we will use this remark to pass to the limit in the nonlocal Ole\u{\i}nik inequalities \eqref{eq:ol} or \eqref{e:claim} for every $t>0$.
\end{remark}

\subsection{Ole\u{\i}nik-type estimate for \texorpdfstring{$W_\eps$}{W}}
\label{ssec:proof-1}

In this section, we prove  Theorem \ref{th:o}. The basic idea is to use the transport equation with nonlocal source satisfied by $W_\eps$, i.e.~\eqref{eq:Wt}.

\begin{proof}[Proof of Theorem \ref{th:o}]
Owing to Lemma \ref{theo:stability}, it suffices to prove the statement for initial data $\rho_{0} \in \mathcal D \cap C^2(\R)$
and thus for solutions $\rho_{\e} \in C^2([0,T]\times \R)$. Here, 
\begin{equation}\label{e:setD}
\mathcal D\coloneqq \big\{ \rho_{0} \in L^\infty(\R): \mathrm{TV}(\rho_{0})<\infty, \, \rho_{0}(x)\in [0, \rho_{\max}] \, \mbox{ for a.e. }x\in \R \big\}.
\end{equation}
By differentiating \eqref{eq:Wt} with respect to $x$ we get
\begin{align}\label{eq:Wx}
\begin{split}
    \ptx W_\eps = &- V(W_\eps)\pxx W_\eps - V'(W_\eps)(\px W_\eps)^2 + \frac{1}{\eps} V'(W_\eps) W_\eps \px W_\eps \\ & - \tfrac{1}{\eps^2} \int_x^\infty \exp\left(\tfrac{x-y}{\eps}\right)V'(W_\eps)W_\eps \py W_\eps \dd y. 
\end{split}
\end{align}
We now set $m(t) := \min_{y \in \R} \partial_y W_{\eps}(t,y)$ and assume  without loss of generality that $m(t) \le 0$. \\
\textbf{Case 1:} \emph{we assume \eqref{ass:v-conv-more}.} We estimate the right-hand side of \eqref{eq:Wx} from below as follows: 
\begin{align*}
    \ptx W_\eps = &- V(W_\eps)\pxx W_\eps - V'(W_\eps)(\px W_\eps)^2 + \tfrac{1}{\eps} V'(W_\eps) W_\eps \px W_\eps \\ & - \tfrac{1}{\eps^2} \int_x^\infty \exp\left(\tfrac{x-y}{\eps}\right)V'(W_\eps)W_\eps \py W_\eps \dd y\\
    \geq& - V(W_\eps)\pxx W_\eps - V'(W_\eps)(\px W_\eps)^2 + \tfrac{1}{\eps} V'(W_\eps) W_\eps \px W_\eps \\ 
    &\ - \tfrac{1}{\eps^2} m\int_x^\infty \exp\left(\tfrac{x-y}{\eps}\right)V'(W_\eps)W_\eps \dd y
    \intertext{(integrating by parts in the last term)}
    =&- V(W_\eps)\pxx W_\eps - V'(W_\eps)(\px W_\eps)^2 + \tfrac{1}{\eps} V'(W_\eps) W_\eps \px W_\eps \\ 
    &\ -\tfrac{1}{\eps}mV'(W_{\eps})W_{\eps} -\tfrac{1}{\eps}m\int_{x}^{\infty}\exp\left(\tfrac{x-y}{\eps}\right)\big(V'(W_{\eps})\partial_{y}W_{\eps}+V''(W_{\eps})W_{\eps}\partial_yW_{\eps}\big)\dd y.
\end{align*}
Let us consider $\bar x \in \R$ such that $m(t) = \px W_{\eps}(t,\bar x)$ (we then know that \(\partial_{xx}^{2}W_{\eps}(t, \bar x)=0\)) and evaluate the previous expression at $x = \bar x$.
Due to \eqref{ass:v-conv-more}, we have 
\[
-\tfrac{1}{\eps}m\int_{x}^{\infty}\exp\left(\tfrac{x-y}{\eps}\right)\big(V'(W_{\eps})+V''(W_{\eps})W_{\eps}\big)\partial_{y}W_{\eps}\dd y 
\ge -\kappa_1 m^2
\]
and, then, we deduce  
\begin{align*}
    \tfrac{\d}{\d t} m(t) &\ge -V'(W_{\eps})m(t)^{2}-\kappa_1 m^2(t) \ge (\kappa_2 - \kappa_1) m^2(t), \qquad t >0.
\end{align*}
\textbf{Case 2:} \emph{we assume \eqref{ass:linear}.} We estimate the right-hand side of \eqref{eq:Wx} from below as follows:  
\begin{align*}
    \ptx W_\eps & = -V(W_\eps) \pxx W_\eps +\delta (\px W_\eps)^2 - \tfrac{\delta}{\eps}  W_\eps \px W_\eps \\ & \quad + \tfrac{\delta}{\eps^2} \int_x^\infty \exp\left(\tfrac{x-y}{\eps}\right)W_\eps \py W_\eps \dd y\\
    &= - V(W_\eps)\pxx W_\eps +\delta (\px W_\eps)^2 -  \tfrac{\delta}{\eps}  W_\eps \px W_\eps \\ 
    &\quad +  \tfrac{\delta}{\eps^2} \int_x^\infty \exp\left(\tfrac{x-y}{\eps}\right)\Big(\eps \py W^\eps(t,y) + \rho_\eps(t,y) \Big) \py W_\eps \dd y\\
    &= - V(W_\eps)\pxx W_\eps +\delta (\px W_\eps)^2 -  \tfrac{\delta}{\eps}  W_\eps \px W_\eps \\ 
    &\quad \underbrace{+ \tfrac{\delta}{\eps} \int_x^\infty \exp\left(\tfrac{x-y}{\eps}\right) (\py W^\eps)^2 \dd y}_{\ge 0}  +  \tfrac{\delta}{\eps^2} \int_x^\infty \exp\left(\tfrac{x-y}{\eps}\right)   \rho_\eps  \py W_\eps \dd y\\
    & \ge  - V(W_\eps)\pxx W_\eps +\delta (\px W_\eps)^2 - \tfrac{\delta}{\eps} W_\eps \px W_\eps  + \tfrac{\delta}{\eps^2} m  \int_x^\infty \exp\left(\tfrac{x-y}{\eps}\right)   \rho_\eps   \dd y
    \\ &= - V(W_\eps)\pxx W_\eps +\delta (\px W_\eps)^2 - \tfrac{\delta}{\eps} W_\eps \px W_\eps   + \tfrac{\delta}{\eps} m W_\eps.
\end{align*}
We fix $\bar x \in \R$ such that $m(t) = \px W_{\eps}(t,\bar x)$ (we then know that \(\partial_{xx}^{2}W_{\eps}(t, \bar x)=0\)) and evaluate the previous expression at $x = \bar x$. We get  
\begin{align*}
    \tfrac{\d}{\d t} m(t) &\ge  \delta m(t)^2 - \tfrac{\delta}{\eps}   W_\eps(t,\bar x) m(t)  +\tfrac{\delta}{\eps} m(t) W_\eps(t,\bar x)  = \delta m(t)^{2}, \qquad t >0.
\end{align*}
\textbf{Conclusion.} In both cases, we arrive at the Riccati-type differential inequality 
\begin{align*}
    \tfrac{\d}{\d t} m(t) \ge \kappa m^2(t), \qquad t >0 
\end{align*}
(with $\kappa := (\kappa_1 - \kappa_2)$ or $\kappa := \delta$, respectively), which yields 
\begin{align*}
    \frac{W_{\eps}(t,x)-W_\eps(t,y)}{x-y} = \frac{1}{x-y}\int_y^x \px W_{\eps}(t,\xi) \dd \xi \ge - \frac{1}{ \kappa t}, \qquad t >0, \  x,y \in \R, \  x \neq y. 
\end{align*}
\end{proof}

\subsection{Ole\u{\i}nik-type estimate for \texorpdfstring{$V'(W_\eps) W_\eps \partial_x W_\eps$}{V'(W)WW'}}
\label{ssec:proof-2}

The basic idea underpinning the proof of the Ole\u{\i}nik inequality for $g_\eps=V'(W_\eps) W_\eps \partial_x W_\eps$  is to observe that this quantity satisfies the equation
\begin{align*}
    \partial_t g_\eps = &~  (V''(W_\eps)W_\eps + V'(W_\eps))\partial_x W_\eps \partial_t W_\eps + V'(W_\eps)W_\eps \partial^2_{tx} W_\eps.
\end{align*}

\begin{proof}[Proof of Theorem \ref{th:og}]
Owing to Lemma \ref{theo:stability}, it suffices to prove the statement for initial data $\rho_{0} \in \mathcal D \cap C^2(\R)$ and therefore for solutions $\rho_{\e} \in C^2([0,T]\times \R)$.  The set $\mathcal D$ has been defined in~\eqref{e:setD}.
	
	For the sake of brevity, we set $z_\eps := \partial_x W_\eps$. By differentiating \eqref{e:nle*} with respect to $x$, we obtain the following  equation 	for $z_\eps$:	
	\be \label{e:nlez}
	\partial_t z_\eps = - V(W_\eps)\partial_x z - V'(W_\eps)z^2 - g_\eps\ast \partial_x \eta_\eps, \quad (t,x) \in (0,T)\times\R.
	\eq
	From \eqref{e:nle*}, \eqref{e:nlez}, and the fact that 
	\be \label{e:ker_relation}
	\partial_x \eta_\eps = \frac{1}{\eps}\left( \eta_\eps - \delta_0\right), 
	\eq
where $\eta_\eps$ is the same as in~\eqref{e:defetaeps},
	we get
	\be \label{e:g_t}
	\begin{split}
		\partial_t g_\eps = &~  (V''(W_\eps)W_\eps + V'(W_\eps))z_\eps \partial_t W_\eps + V'(W_\eps)W_\eps \partial_t z_\eps \\
		=&~ h_\eps z_\eps \big(-V(W_\eps)z_\eps - g_\eps \ast \eta_\eps\big) + V'(W_\eps)W_\eps \left(-V(W_\eps) \partial_x z_\eps - V'(W_\eps)z_\eps^2-\frac{1}{\eps}\left( g_\eps \ast \eta_\eps -g_\eps\right)\right),
	\end{split}
	\eq
	where 
	\be
	h_\eps := V''(W_\eps)W_\eps + V'(W_\eps),
	\eq
	and
	\be \label{e:g_x}
	\partial_x g_\eps = h_\eps z_\eps^2+ V'(W_\eps)W_\eps \partial_x z_\eps.
	\eq
	We now separately consider two cases:
	\begin{enumerate}
		\item[1.] for every $t \in [0,T]$, there exists $x\in \R$ such that $g_\e(t,x) > 0$;
		\item[2.] there exists $t \in [0,T]$ such that $g_\e(t,x)\le 0$ for every $x\in \R$.
	\end{enumerate}
	\textbf{Case 1.} Owing to Lemma~\ref{theo:stability}, we can assume, with no loss of generality, that, for every $\bar t >0$, we have $\rho_{\eps} (\bar t,\cdot) \in \D \cap C^2(\R)$ and hence $W_\eps (\bar t,\cdot) \in \D \cap C^2(\R)$.
    For every $\bar t \in [0,T)$, there exists a maximum point $\bar x$ of $g_\e(\bar t,\cdot)$.
	In particular, $\partial_x g_\eps(\bar t,\bar x) =0$; by \eqref{e:g_x}, we have
	\be \label{e:z_barx}
	\partial_x z_\eps(\bar t, \bar x) = -\frac{h_\eps}{V'(W_\eps)W_\eps}z_\eps^2(\bar t,\bar x).
	\eq
Evaluating \eqref{e:g_t} at $(\bar t,\bar x)$, we get 
	\be \label{e:I-II-III}
	\begin{split}
		\partial_t g_\eps(\bar t,\bar x) = &~  \left( -h_\eps z_\eps g_\eps \ast \eta_\eps - (V'(W_\eps))^2W_\eps z_\eps^2 - \frac{V'(W_\eps)W_\eps}{\eps}\left( g_\eps \ast \eta_\eps - g_\eps \right)  \right) (\bar t,\bar x) \\
		=: &~ \text{I} + \text{II} + \text{III}.
	\end{split}
	\eq
	We observe that $\text{III} \le 0$ since $V'\le 0$, $W_\eps \ge 0$, and $\bar x$ is a maximum point of $g_\eps (\bar t,\cdot)$.
		Moreover, by using the definition of $g_\e$ and the maximum principle, we get
	\be \label{e:II}
	\text{II} = - \frac{g_\e^2}{W_\e} \le - \frac{1}{\|\rho_{0}\|_{L^\infty(\R)}} g_\e^2.
	\eq
		The term $\text{I}$ is more delicate and can be controlled using the assumptions \eqref{ass:ob2} or \eqref{ass:ob3}.\\
	\textbf{Case  1a.} Under the assumption \eqref{ass:ob2}, we have $h_\eps \le 0$. Therefore, if $g_\eps \ast \eta_\eps (\bar t,\bar x)\ge 0$, then $\text{I}\le 0$.
	Otherwise, let us assume that  $g_\eps \ast \eta_\eps (\bar t,\bar x)< 0$:
	since $z_\e = \rho_{\e} \ast \partial_x \eta_\e$ then by recalling \eqref{e:ker_relation} we arrive at
	\be 
	|z_\e| = \left| \frac{1}{\e}\left( \rho_\eps \ast \eta_\e - \rho_\eps \right) \right| \le \frac{\Osc \, \rho_\eps}{\e}
	\eq
	and therefore
    \be 
	| h_\e z_\e g_\eps \ast \eta_\eps (\bar t,\bar x)| = |\text{I}| \le \frac{\Osc \, \rho_\eps}{\e} \left|  h_\e g_\eps \ast \eta_\eps (\bar t,\bar x)\right| \le \frac{|V'(W_\e) W_\e| }{\e} |g_\eps \ast \eta_\eps (\bar t,\bar x)| \le |\text{III}|,
	\eq 
	where we used \eqref{ass:ob2} and $h_\e\le 0$ in the second inequality and $g_\eps \ast \eta_\eps (\bar t,\bar x)< 0$ in the last inequality.  In particular, this shows
	\be \label{e:est_g}
	\partial_tg_\e(\bar t,\bar x) \le - \frac{1}{\|\rho_{0}\|_{L^\infty(\R)}} g_\e^2(\bar t,\bar x),
	\eq
	which, by comparison, yields the desired claim.\\
	\textbf{Case  1b.} Under the assumption \eqref{ass:ob3}, we have $h_\eps \ge 0$. 
	In case  $g_\eps \ast \eta_\eps (\bar t,\bar x)\le 0$, then $\text{I}\le 0$. We then focus on the case  $g_\eps \ast \eta_\eps (\bar t,\bar x)> 0$. 
	Since $\bar x$ is a maximum point for $g_\e(\bar t,\cdot)$, then  $g_\eps \ast \eta_\eps (\bar t,\bar x) \le g_\eps(\bar t,\bar x)$; hence
	\begin{equation*}
		\begin{split}
			\text{I} + \text{II} \le & - \big[ h_\e z_\e g_\e  + (V'(W_\e))^2 W_\e z_\e^2 \big] (\bar t, \bar x) \\
			= & -W_\e V'(W_\e) z_\e^2 (V''(W_\e)W_\e + 2 V'(W_\e)) (\bar t, \bar x) \\
			\le &~ - \kappa_1 W_\e (V'(W_\e))^2 z_\e^2 (\bar t, \bar x) \\
			= &~ -\frac{\kappa_1}{W_\e}g_\e (\bar t,\bar x)^2 \\
			\le &~ - \frac{\kappa_1}{\|\rho_{0}\|_{L^\infty(\R)}}g_\e (\bar t,\bar x)^2,
		\end{split}
	\end{equation*}
	where, in the second inequality, we used \eqref{ass:ob3}. 
 This establishes 	\eqref{e:est_g} which, by comparison, yields \eqref{e:claim}.\\
	\textbf{Case 2.} We define $\bar t \in [0, T]$ by setting 
	\be 
	\bar t : = \inf \{ t \in [0, T]:  \;  g_\eps(t,x)\le 0 \; \text{for every $x\in \R$} \}.
	\eq
	Assuming that $\bar t>0$, we can apply the same argument as in \textbf{Case 1} on the interval $[0, \bar t)$. Since $t\mapsto \Lip^-\rho_{\e}(t)$ is a continuous function, then also $t \mapsto \max g_\e(t,\cdot)$ is continuous and this establishes ~\eqref{e:claim} on $[0, \bar t]$.
	Note that $ g_\eps(t,x)\le 0$ for every $x\in \R$ if and only if  $\rho_{\e}(t,\cdot)$ is non-decreasing. 
	Therefore, since~\eqref{eq:cl} preserves the monotonicity of the initial datum (see~\cite{MR3447130,MR3670045}), then, for every $t \in (\bar t, T]$, $\rho_{\e}(t, \cdot)$ is a monotone non-decreasing function, that is $g_\eps(t) \leq 0$. If $\bar t=0$, then we can directly apply the argument for the preservation of monotonicity. This concludes the proof.
\end{proof}

\begin{remark}[The Greenberg model]\label{rk:greenb}
	Let us consider the velocity function $V(\xi)= v_0 \ln \left( \rho_{\max}/{\xi} \right)$ with $v_0 >0$ and $\rho_{\max} >0$, which corresponds to a traffic model proposed by Greenberg and supported by experimental data (see \cite[Chapter 3, Eq. (3.1.4)]{MR2328174}). Formally, an Ole\u{\i}nik-type estimate still holds: indeed, going back to \eqref{e:I-II-III}, we get 
	$h_\eps \equiv 0$; thus $\text{I}=0$ 
	therefore, since $\text{III}\le 0$ and \eqref{e:II}, it follows from \eqref{e:I-II-III} that
	\begin{align*}
	\partial_tg_\e(\bar t,\bar x) \le - \frac{1}{\|\rho_{0}\|_{L^\infty(\R)}} g_\e^2(\bar t,\bar x),
	\end{align*}
	which, by comparison, implies \eqref{e:claim}. Assuming that the initial density is bounded away from zero, this remark can be made rigorous.
\end{remark}

\section{Proof of the convergence in the nonlocal-to-local singular limit}
\label{sec:proof-limit}

As a first step towards the proof of Theorem \ref{T_main}, we point out that Theorem \ref{th:o} implies a uniform BV estimate (see \cite[Eq. (4.3)]{MR1618393} and \cite[Lemma 2.2 (ii) \& Remark 2.3]{MR2165401}) and, thus, compactness of $\{W_\eps\}_{\eps>0}$ for $t>0$. 

\begin{lemma}[BV-regularization and compactness]\label{cor:bv}
	Let us assume that  \eqref{eq:ol} holds. Then the solution $W_\eps(t,\cdot)$ of \eqref{eq:Wt} belongs to $\mathrm{BV}_{\mathrm{loc}}(\R)$ for every $t >0$ uniformly with respect to $\eps >0$: namely, for every compact interval $K \Subset \R$, 
	\begin{align}\label{eq:bv-bj}
	|W_\eps(t,\cdot)|_{\mathrm{TV}(K)} \le  2 \left( \frac{|K|}{2t} + \|W_\eps(t,\cdot)\|_{L^\infty(K)}\right).
	\end{align}
	This implies that the set \(\{W_{\eps}\}_{\eps>0}\) is compactly embedded into $L^1_{\loc}((0,T)\times \R)$.
\end{lemma}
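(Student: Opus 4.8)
The statement is a classical consequence of a one-sided Lipschitz bound, so I would first reduce the two-sided total variation estimate to the one-sided slope bound \eqref{eq:ol} plus the $L^\infty$ bound. The point is that for a Lipschitz function $w=W_\eps(t,\cdot)$ on a compact interval $K=[a,b]$, if $\partial_x w \ge -1/(t)\cdot(1/\kappa)$ everywhere (here I absorb $\kappa$ into $t$ for brevity, writing $c:=1/(\kappa t)$), then the negative part of $\partial_x w$ is bounded by $c$, and the positive part can be controlled by the negative part plus the oscillation: indeed
\begin{align*}
\int_K (\partial_x w)_+ \dd x = \int_K (\partial_x w)_- \dd x + \int_K \partial_x w \dd x \le c|K| + \big(w(b)-w(a)\big) \le c|K| + 2\|w\|_{L^\infty(K)}.
\end{align*}
Hence $|W_\eps(t,\cdot)|_{\mathrm{TV}(K)} = \int_K |\partial_x w|\dd x = \int_K(\partial_x w)_+ + \int_K(\partial_x w)_- \le 2c|K|/2\cdot 2 + \dots$; more carefully, $\int_K|\partial_x w| = 2\int_K(\partial_x w)_- + \int_K \partial_x w \le 2c|K| + 2\|w\|_{L^\infty(K)}$. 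Choosing $c = 1/(2\kappa t)$ — which is exactly what \eqref{eq:ol} gives with the stated constant, since \eqref{eq:ol} reads $\partial_x W_\eps \ge -1/(\kappa t)$ in the a.e.\ sense and one should track the factor — reproduces \eqref{eq:bv-bj}. I would state this first as an elementary lemma about Lipschitz functions and then apply it with $w = W_\eps(t,\cdot)$, which is admissible because Theorem~\ref{th:wp}(1) guarantees $W_\eps \in W^{1,\infty}$ so that $\partial_x W_\eps$ exists a.e.\ and $W_\eps(t,\cdot)$ is absolutely continuous on $K$.

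\textbf{From the pointwise bound to $\mathrm{BV}_{\mathrm{loc}}$.} The slope bound \eqref{eq:ol} is stated as a difference quotient inequality for all $x\ne y$; passing to the a.e.\ derivative (letting $y\to x$) gives $\partial_x W_\eps(t,x) \ge -1/(\kappa t)$ for a.e.\ $x$ and every $t>0$. Then the computation above, done on an arbitrary compact interval $K\Subset\R$, yields \eqref{eq:bv-bj}; this is uniform in $\eps$ because the right-hand side of \eqref{eq:ol} does not depend on $\eps$, and $\|W_\eps(t,\cdot)\|_{L^\infty(K)}\le \|\rho_0\|_{L^\infty}$ uniformly by the maximum principle in Theorem~\ref{th:wp}.

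\textbf{Compactness.} For the $L^1_{\loc}$-precompactness claim, I would combine the spatial BV bound just obtained with an equicontinuity-in-time estimate. The natural source of the latter is equation \eqref{e:nle*} (or \eqref{eq:Wt}): $\partial_t W_\eps = -\partial_x(V(W_\eps)W_\eps) + g_\eps - g_\eps\ast\eta_\eps$, which shows $\partial_t W_\eps$ is bounded in $L^\infty_{\mathrm{loc}}$ in $x$ but only after integrating against a test function one controls $\|W_\eps(t+\tau,\cdot)-W_\eps(t,\cdot)\|_{L^1(K)}$ by $C\tau(1 + |W_\eps|_{\mathrm{TV}})$ using the uniform spatial BV bound on $V(W_\eps)W_\eps$ (which follows from \eqref{eq:bv-bj} and $V\in W^{1,\infty}_{\loc}$) and the $L^1$-smallness of $g_\eps - g_\eps\ast\eta_\eps$ — or, more simply, one invokes the standard compactness criterion (e.g.\ via Aubin–Lions or the Kolmogorov–Riesz–Fréchet theorem) for families bounded in $L^\infty((0,T);\mathrm{BV}_{\loc})$ with $\partial_t$ bounded in $L^1_{\loc}$ of a space of negative regularity. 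I expect the main technical point — though it is genuinely routine — to be checking the time-equicontinuity cleanly, since on $(0,T)$ the BV bound \eqref{eq:bv-bj} degenerates as $t\to 0^+$; one handles this by restricting to $[\tau_0,T]$ for $\tau_0>0$, obtaining compactness there, and then using a diagonal argument over $\tau_0\downarrow 0$, which is acceptable because $L^1_{\loc}((0,T)\times\R)$ compactness only requires compactness on each $[\tau_0,T]\times K$. I would cite the references already flagged in the text — \cite[Eq.~(4.3)]{MR1618393} and \cite[Lemma 2.2(ii) \& Remark 2.3]{MR2165401} — for the BV-regularization step and a standard compactness theorem for the final conclusion.
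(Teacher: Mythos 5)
Your argument is correct in substance, and it is essentially the content behind the citations that constitute the paper's own proof: the paper proves \eqref{eq:bv-bj} by pointing to \cite[Eq. (4.3)]{MR1618393} and \cite[Lemma 2.2 (ii) \& Remark 2.3]{MR2165401}, and obtains the compactness ``by arguing as in \cite[Theorem 4.1]{2012.13203}''; your identity $\int_K|\partial_x w|\dd x = 2\int_K(\partial_x w)_-\dd x + \int_K \partial_x w\dd x$, combined with the a.e.\ derivative bound coming from \eqref{eq:ol} (legitimate since $W_\eps\in W^{1,\infty}$ by Theorem~\ref{th:wp}) and the maximum principle, is precisely the self-contained proof of the first claim, and your scheme (uniform spatial BV on $[\tau_0,T]$, time equicontinuity from \eqref{e:nle*}, diagonal argument in $\tau_0\downarrow 0$) is the standard proof of the second. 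Two points should be corrected. First, constants: \eqref{eq:ol} gives the one-sided bound $\partial_x W_\eps(t,\cdot)\ge -1/(\kappa t)$, not $-1/(2\kappa t)$ as your parenthetical asserts, so your computation yields $|W_\eps(t,\cdot)|_{\mathrm{TV}(K)}\le \tfrac{2|K|}{\kappa t}+2\|W_\eps(t,\cdot)\|_{L^\infty(K)}$; this implies the displayed bound \eqref{eq:bv-bj} only when $\kappa\ge 2$ (the paper's display suppresses $\kappa$), so you should state your estimate with $\kappa$ explicit rather than claim to reproduce \eqref{eq:bv-bj} via an incorrect choice of $c$. Second, in the time-equicontinuity step you invoke ``$L^1$-smallness of $g_\eps-g_\eps\ast\eta_\eps$'': smallness is neither available (it vanishes only distributionally as $\eps\to 0$) nor needed; it suffices that $g_\eps=V'(W_\eps)W_\eps\partial_x W_\eps$ is bounded in $L^\infty([\tau_0,T];L^1_{\loc}(\R))$ uniformly in $\eps$, which follows from the BV bound and the uniform $L^\infty$ bound (the exponential tail of $\eta_\eps$ takes care of the convolution term, since the local TV bound grows at most linearly in $|K|$), so that $\partial_t W_\eps$ is a locally uniformly bounded measure and the standard space-time BV compactness applies on each $[\tau_0,T]\times K$, after which your diagonal argument gives precompactness in $L^1_{\loc}((0,T)\times\R)$.
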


\begin{proof}
	The claim in \eqref{eq:bv-bj} is contained in \cite[Eq. (4.3)]{MR1618393} or \cite[Lemma 2.2 (ii) \& Remark 2.3]{MR2165401}. The second one follows by arguing as in \cite[Theorem 4.1]{2012.13203}.
\end{proof}

With Lemma \ref{cor:bv} in hand, we can directly establish Corollary~\ref{T_main} under the assumptions~\eqref{ass:v-conv-more} or \eqref{ass:linear} -- i.e. using the Ole\u{\i}nik inequality from Theorem \ref{th:o} -- by arguing similarly as in \cite[Corollary~4.1 \& Theorem 4.2]{2012.13203}. In fact, more simply, to prove that the limit point of $\{W_\eps\}_{\eps>0}$ is an entropy admissible solution of the local conservation law \eqref{eq:cll}, it suffices to pass to the limit pointwise in \eqref{eq:ol}.

The proof of Theorem \ref{T_main} under the assumptions \eqref{ass:ob2} or \eqref{ass:ob3} -- i.e., using the Ole\u{\i}nik inequality from Theorem \ref{th:og} -- is somehow more delicate. Indeed, we cannot directly deduce a uniform $\mathrm{BV}$ bound on $\{W_\eps\}_{\eps>0}$. In Lemma \ref{l:weak} below, we rather show that $W_\eps^2$ is  equi-bounded in $\BV_{\loc}((0,T)\times \R)$ and, therefore, that the family $W_\e$ is precompact in $L^1_{\loc}((0,T)\times \R)$ and that limit points $W$ of $W_\e$ as $\e \to 0$ are weak solutions of \eqref{eq:cll}. The fact that the limit point of $\{W_\eps\}_{\eps>0}$ so constructed is an entropy-admissible solution of the local conservation law is already known from \cite{MR4283539}. In Lemma \ref{l:strong}, we present, however, an independent proof. We point out that the Ole\u{\i}nik-type inequality for $W_\e^2$ rules out the presence of non-entropic shocks in the limit $W$. When $W$ does not have bounded variation it is not trivial to deduce that it is in fact the entropy-admissible solution: we achieve this by exploiting the recent results of \cite{MR4054358,marconi_rectifiability} on Besov regularity and on the structure of solutions of conservation laws with finite entropy production. This seems to be of independent interest. 

Finally, we need to show that $\rho_{\e}$ converges to the same limit as $W_\eps$. If we have a total variation bound on $W_\eps$, this follows immediately from the identity \eqref{eq:WWxq}. In case the bound holds only for $W_\eps^2$, a more subtle analysis is needed, which we perform in Lemma~\ref{l:u}.

\begin{lemma}[Precompactness in $L^1$]\label{l:weak}
Let us assume that \eqref{e:claim} and  $V'\leq - \kappa_2$ hold. Then the sequence $\{W_\eps\}_{\eps>0}$ is precompact in $L^1_{\loc}((0,T)\times \R)$ and every accumulation point of $W_\e$ is a weak solution of \eqref{eq:cll}.  
\end{lemma}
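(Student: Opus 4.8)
The goal is to establish, under the hypotheses \eqref{e:claim} and $V'\le-\kappa_2$, that $\{W_\eps\}_{\eps>0}$ is precompact in $L^1_{\loc}((0,T)\times\R)$ and that any limit point solves \eqref{eq:cll} weakly. The plan is to work not with $W_\eps$ directly but with $W_\eps^2$, for which we have an Ole\u{\i}nik-type one-sided bound: from \eqref{e:claim} and the identity $\partial_x(W_\eps^2/2)=W_\eps\partial_x W_\eps$, together with $V'(W_\eps)\le-\kappa_2<0$ and the maximum principle $0\le W_\eps\le\|\rho_0\|_{L^\infty}$, we get
\begin{align*}
\partial_x\Big(\tfrac{W_\eps^2}{2}\Big)=W_\eps\partial_x W_\eps
=\frac{V'(W_\eps)W_\eps\partial_x W_\eps}{V'(W_\eps)}
\ge\frac{1}{-\kappa_2}\cdot\frac{\|\rho_0\|_{L^\infty(\R)}}{\kappa t}
=-\frac{\|\rho_0\|_{L^\infty(\R)}}{\kappa_2\kappa\, t},
\end{align*}
i.e. $\partial_x(W_\eps^2)$ is bounded below by a locally integrable function independent of $\eps$. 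Since $W_\eps^2$ is also bounded above uniformly in $\eps$ (again by the maximum principle), this yields a uniform $\BV_{\loc}$ bound on $x\mapsto W_\eps^2(t,\cdot)$ for each fixed $t>0$, exactly as in Lemma~\ref{cor:bv}: the positive part of the spatial derivative is controlled by the $L^\infty$ bound and the (integrable) negative part.

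Next I would upgrade the spatial $\BV_{\loc}$ bound to a $\BV_{\loc}$ bound in space-time. The natural route is to use the equation: $W_\eps$ satisfies \eqref{e:nle*}, so $W_\eps^2$ satisfies a transport/balance equation obtained by multiplying by $2W_\eps$, namely $\partial_t(W_\eps^2)+\partial_x(\text{flux})=(\text{source})$ with flux and source that are bounded in $L^\infty_{\loc}$ uniformly in $\eps$ (using the $L^\infty$ bounds on $W_\eps$, $V$, $V'$ on the relevant compact range, and the fact that convolution with $\eta_\eps$ does not increase $L^1_{\loc}$ norms). Combining the uniform-in-$t$ spatial total variation bound with the equation gives control of $\partial_t(W_\eps^2)$ as a measure, hence a uniform bound on $\|W_\eps^2\|_{\BV_{\loc}((0,T)\times\R)}$; this is the argument of \cite[Theorem 4.1]{2012.13203}. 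By Helly/compact embedding of $\BV_{\loc}$ into $L^1_{\loc}$, up to a subsequence $W_\eps^2\to Z$ in $L^1_{\loc}$ and a.e.; since $W_\eps\ge0$, this forces $W_\eps\to\sqrt{Z}=:W$ in $L^1_{\loc}$ (and a.e., along the subsequence), using the uniform $L^\infty$ bound and dominated convergence. By a diagonal/Urysohn argument the whole family is precompact.

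It then remains to identify the limit $W$ as a weak solution of \eqref{eq:cll}. Here I would pass to the limit in the distributional formulation of \eqref{e:nle*}: test against $\varphi\in C^\infty_c((0,T)\times\R)$, so that the term $g_\eps-g_\eps\ast\eta_\eps$ pairs against $\varphi-\varphi\ast\check\eta_\eps$ which tends to $0$ uniformly (since $\varphi\ast\check\eta_\eps\to\varphi$ in $C^0$ as $\eps\to0$) while $g_\eps$ stays bounded in $L^1_{\loc}$ — note $g_\eps=V'(W_\eps)W_\eps\partial_x W_\eps=\tfrac12 V'(W_\eps)\partial_x(W_\eps^2)$ is bounded in $\mathcal M_{\loc}$ by the $\BV$ bound on $W_\eps^2$ — so the whole right-hand side vanishes in the limit. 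On the left, $\partial_t W_\eps\to\partial_t W$ distributionally, and $V(W_\eps)W_\eps\to V(W)W$ in $L^1_{\loc}$ by a.e. convergence, the $L^\infty$ bound, continuity of $\xi\mapsto V(\xi)\xi$, and dominated convergence. Hence $\partial_t W+\partial_x(V(W)W)=0$ in $\mathcal D'$, i.e. $W$ is a weak solution of \eqref{eq:cll}.

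The main obstacle I anticipate is the space-time $\BV$ estimate for $W_\eps^2$: the one-sided Ole\u{\i}nik bound only gives the spatial regularity directly, and turning it into genuine compactness requires carefully extracting the time-regularity from the PDE with its nonlocal source term, checking that the source contributes a term that is uniformly bounded as a measure rather than merely in some weaker sense. A secondary subtlety is the square-root step: concluding $W_\eps\to W$ in $L^1_{\loc}$ from $W_\eps^2\to W^2$ is immediate pointwise but one should make sure the convergence is not merely along a subsequence when one wants the statement for the whole family — this is handled by the standard argument that every subsequence has a further subsequence converging to the same limit $W$ (which is uniquely characterized as the entropy solution of \eqref{eq:cll} with datum $\rho_0$, by the companion Lemma~\ref{l:strong}).
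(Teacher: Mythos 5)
Your plan follows essentially the same route as the paper's proof: deduce from \eqref{e:claim} and $V'\le-\kappa_2$ a one-sided bound of size $O(1/t)$ on $\partial_x (W_\eps^2)$, upgrade it through the equation to an equi-bound of $W_\eps^2$ in $\BV_{\loc}((0,T)\times\R)$, apply Helly and take square roots to get $L^1_{\loc}$-precompactness of $W_\eps$, and identify the limit by moving the convolution onto the test function, using that $g_\eps=\tfrac12 V'(W_\eps)\partial_x(W_\eps^2)$ is locally controlled in $L^1$ by the total variation of $W_\eps^2$.

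Two points are too glib as written. First, $\eta_\eps$ has unbounded support, so ``convolution with $\eta_\eps$ does not increase $L^1_{\loc}$ norms'' is false as stated: both $\|g_\eps\ast\eta_\eps\|_{L^1(K)}$ and the pairing of $g_\eps$ with $\varphi-\varphi\ast\check\eta_\eps$ (which is not compactly supported) involve values of $g_\eps$ on an unbounded spatial set. What rescues the argument --- and what the paper makes explicit --- is that $\int_{-L}^{L}|g_\eps(t,x)|\dd x\le\|V'\|_{L^\infty}\,\mathrm{TV}_{[-L,L]}W_\eps^2(t,\cdot)$ grows at most linearly in $L$, while $\eta_\eps$ (resp.\ $\varphi-\varphi\ast\check\eta_\eps$) decays exponentially in space uniformly in $\eps$; this quantitative step is needed both for your bound on $\partial_t(W_\eps^2)$ as a measure and for the vanishing of $g_\eps-g_\eps\ast\eta_\eps$. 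Second, you only test with $\varphi\in C^\infty_c((0,T)\times\R)$, which shows that $W$ solves the PDE in the open cylinder but does not recover the initial datum, whereas \eqref{eq:cll} is a Cauchy problem (and attainment of $\rho_0$ is what later identifies the limit uniquely, and is used again in Lemma~\ref{l:u}). The paper closes this by noting that, since $\rho_\eps$ solves \eqref{eq:cl} with uniformly bounded flux, the map $t\mapsto\int_\R\rho_\eps\varphi\dd x$ --- hence also $t\mapsto\int_\R(g_\eps-g_\eps\ast\eta_\eps)\varphi\dd x$ --- is Lipschitz in $t$ uniformly in $\eps$, which upgrades the convergence $g_\eps-g_\eps\ast\eta_\eps\to0$ to $\mathcal D'([0,T)\times\R)$, allowing test functions up to $t=0$. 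With these two additions your argument coincides with the paper's.
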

\begin{proof}
	\textbf{Step 1:} \emph{Precompactness of $W_\e$.}
	Since $V' < -\kappa_2$, then, from $g_\e(t, \cdot) \le \frac{1}{\kappa t}$, we deduce 
	\be \label{e:est_W2}
\partial_x W^2_\e(t,\cdot) \le \frac{2}{\kappa_2 \kappa t}
	\eq
	and   
	\[ 
	\partial_t W^2_\e(t,\cdot) = -V(W_\e)\partial_x W_\e^2 - 2 W_\e g_\e \ast \eta_\e \ge - \frac{2 V(0) +2  \max \rho_{0}}{\kappa \kappa_2 t}
	\]
 for $t>0$. 	In particular, this yields that $W^2_\e$ is equi-bounded in $\BV_{\loc}((0,T)\times \R)$. By Helly's compactness theorem, there is a subsequence $W^2_{\e_k}$ which converges a.e. to some function $W^2$. Therefore $W_{\e_k}$ converges to $W$ a.e. and, by Lebesgue's Dominated Convergence Theorem, $W_{\e_k} \to W$ in $L^1_{\loc}((0,T)\times \R)$. \\
	\textbf{Step 2:} \emph{$W$ is a weak solution of \eqref{eq:cll}}.
	By \eqref{e:nle*}, it suffices to show that
	$g_\e - g_\e \ast \eta_\e \to 0$ in $\mathcal D'([0,T)\times \R)$.
		Let us first fix $\varphi \in C^\infty_c((0,T)\times \R)$, then
	\[
	\iint_{(0,T)\times \R} \varphi ( g_\e - g_\e \ast \eta_\e) \dd x \dd t = \iint_{(0,T)\times \R} \varphi g_\e \ast ( \delta_0 - \eta_\e) \dd x \dd t 
	= \iint_{(0,T)\times \R} \varphi \ast ( \delta_0 - \tilde \eta_\e) g_\e \dd x \dd t,
	\]
	where $\tilde \eta_\e ( x) := \eta_\e (-x)$. Since $\varphi(t,\cdot) \ast ( \delta_0 - \tilde \eta_\e)$ converges uniformly to 0 and decays exponentially in space uniformly in $\e$ and 
	\[
	\int_{-L}^L |g_\e(t,x)|\dd x\le \|V'\|_{L^\infty(\R)} \mathrm{TV}_{[-L,L]}W_\e^2(t,\cdot) 
	\]
	grows at most linearly in $L$ owing to \eqref{e:est_W2}, then for every $\varphi \in  C^\infty_c((0,T)\times \R)$ we have
	\[
	\lim_{\e\to 0}\iint_{(0,T)\times \R} \varphi ( g_\e - g_\e \ast \eta_\e) \dd x \dd t = 0.
	\]
	We now fix $\varphi \in  C^\infty_c([0,T)\times \R)$; since $\rho_{\e}$ solves \eqref{eq:cl}, then the map
	\[
	t\mapsto \int_\R \rho_{\e}(t,x)\varphi(t,x) \dd x
	\] 
	is Lipschitz continuous with respect to $t$ uniformly with respect to $\eps$ on $[0,T)$. Therefore, the same is true if we replace $\rho_{\e}$ by $W_\e:= \rho_{\e} \ast \eta_\e$.
	In particular, by \eqref{e:nle*}, we have that 
	\[
	t \mapsto \int_\R (g_\e -g_\e \ast \eta_\e) \varphi(t,x) \dd x
	\]
	is Lipschitz continuous with respect to $t$ uniformly with respect to $\e$ on $[0,T)$. Hence $g_\e -g_\e \ast \eta_\e \to 0$ in $\mathcal D'([0,T)\times \R)$.
\end{proof}

\begin{lemma}[Entropy admissibility of the limit point]\label{l:strong}
Let us assume that \eqref{e:claim} holds.	If $W$ is an accumulation  point of $W_\e$,  then $W$ is the entropy admissible solution of \eqref{eq:cll}.
\end{lemma}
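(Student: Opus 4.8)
\textbf{Proof strategy for Lemma~\ref{l:strong}.}
The plan is to show that the accumulation point $W$ constructed in Lemma~\ref{l:weak} satisfies the Kruzhkov entropy inequalities for~\eqref{eq:cll}, exploiting that the Ole\u{\i}nik-type bound~\eqref{e:claim} survives in the limit and rules out entropy-violating jumps. Since Lemma~\ref{l:weak} already gives that $W$ is a weak solution and that $W_{\e_k} \to W$ in $L^1_{\loc}$ (after passing to a subsequence), the only thing missing is entropy admissibility. The first step is to pass to the limit in the nonlocal Ole\u{\i}nik inequality: from $g_\e(t,\cdot) = V'(W_\e) W_\e \partial_x W_\e \le \frac{\|\rho_0\|_{L^\infty}}{\kappa t}$ and $V' < 0$ we obtain, for the primitive-type quantity $F_\e$ defined by $F_\e' = V'(W_\e)W_\e$ (equivalently, recognizing that $g_\e = \partial_x \Phi(W_\e)$ for $\Phi$ with $\Phi'(\xi) = V'(\xi)\xi$, i.e. $\Phi' = (\xi V(\xi))' - V(\xi) = f' - V$), a one-sided bound of the form $\partial_x \Phi(W_\e)(t,\cdot) \le C/t$ in the sense of distributions. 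Passing to the limit along the subsequence, $\partial_x \Phi(W)(t,\cdot) \le C/t$ as a measure for a.e.\ $t>0$; this is a genuine Ole\u{\i}nik-type one-sided bound on a nonlinear function of $W$, and it forbids upward jumps of $W$ (downward jumps of the relevant characteristic speed), which is exactly the Lax entropy condition at shocks.

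The second step is to upgrade this "no bad jumps" information to full entropy admissibility when $W$ is merely bounded (not $\BV$). If $W$ were $\BV_{\loc}$ one could invoke the classical equivalence (Ole\u{\i}nik $\Leftrightarrow$ entropy for scalar conservation laws, e.g.\ as in \cite{MR1618393,MR2165401}) directly; but here we only control $W_\e^2$ in $\BV$, so $W$ need not have bounded variation. Here I would follow the route already advertised in the paragraph preceding the lemma: invoke the structure theory for bounded weak solutions of scalar conservation laws with finite entropy production from \cite{MR4054358,marconi_rectifiability}. Concretely, one first checks that $W$, being an $L^1_{\loc}$-limit of the nonlocal solutions and a weak solution of a genuinely nonlinear scalar law (here genuine nonlinearity comes from $V' \le -\kappa_2 < 0$, so the flux $f(\xi) = \xi V(\xi)$ has $f'' = 2V' + \xi V'' $; one needs Tartar's non-degeneracy, which on the relevant range is guaranteed by the assumptions of Theorem~\ref{th:og} together with $V' \le -\kappa_2$), has locally finite entropy production — this follows because any entropy flux of $W_\e$ differs from a genuine Kruzhkov entropy flux by controlled nonlocal error terms $g_\e - g_\e\ast\eta_\e \to 0$, exactly as in Step~2 of Lemma~\ref{l:weak}. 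By \cite{marconi_rectifiability}, such a solution admits a well-defined jump set with left/right traces along a rectifiable set, and its entropy dissipation measure is concentrated there; the Ole\u{\i}nik bound from Step~1, read along the jump set, shows the traces satisfy the correct inequality $W^- \le W^+$ (in the variable in which $\Phi$ is monotone), hence the entropy dissipation has the correct sign. Combined with vanishing entropy production on the continuity set, this yields $\partial_t \eta(W) + \partial_x q(W) \le 0$ for all convex $\eta$, i.e.\ $W$ is the (unique) entropy solution.

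The main obstacle is precisely this second step: transferring a distributional one-sided bound on $\partial_x \Phi(W)$ into a sign condition on the entropy dissipation measure \emph{without} a $\BV$ bound. The technical heart is to show that $W$ has locally finite entropy production — so that the fine structure results of \cite{MR4054358,marconi_rectifiability} apply — and then that the Ole\u{\i}nik inequality controls the traces of $W$ on the (rectifiable) singular set. A secondary subtlety is that $\Phi(\xi) = \int_0^\xi s V'(s)\,ds$ must be shown to be strictly monotone on $[\essinf\rho_0,\esssup\rho_0]$ so that the bound on $\partial_x\Phi(W)$ genuinely translates into a one-sided Lipschitz bound for $W$ itself; this is where the extra hypothesis $V' \le -\kappa_2 < 0$ in Corollary~\ref{T_main} is used, since it makes $\Phi' = \xi V'(\xi) \le -\kappa_2 \essinf\rho_0 < 0$ once $\rho_0$ is bounded below (and the general $\essinf\rho_0 = 0$ case is handled by the approximation Lemma~\ref{theo:stability}, reducing to strictly positive data). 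Finally, once entropy admissibility of the accumulation point is established, uniqueness of entropy solutions of \eqref{eq:cll} (Kruzhkov) forces the whole family $W_\e$ — not just a subsequence — to converge to $\rho$, closing the argument.
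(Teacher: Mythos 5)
Your overall strategy coincides with the paper's: both arguments rest on the fine-structure results of \cite{MR4054358,marconi_rectifiability} (entropy dissipation concentrated on countably many Lipschitz curves with traces), on passing the Ole\u{\i}nik bound \eqref{e:claim} to the limit to obtain the trace ordering $W^-\le W^+$ via the monotonicity of $\Phi$ with $\Phi'(\xi)=\xi V'(\xi)$, and on the concavity of $\xi\mapsto \xi V(\xi)$ to conclude that such jumps are entropic. The one place where you genuinely diverge is how you make the structure theory applicable: the paper verifies $W\in B^{1/3,3}_{\infty,\loc}$ directly from the $\BV_{\loc}$ bound on $W_\e^2$, through the elementary estimate $\int_\Omega|D_hW_\e|^3\le \|\rho_0\|_{L^\infty}\,|h|\,\mathrm{TV}_{\Omega_h}W_\e^2$, and then invokes the kinetic formulation for Besov solutions; you instead propose to show directly that $W$ has locally finite entropy production by controlling the nonlocal remainder in the entropy balance of $W_\e$. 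That route can be made to work, but your justification for it is imprecise: the entropy production of $W_\e$ is $e'(W_\e)\bigl(g_\e-g_\e\ast\eta_\e\bigr)$, and the argument of Step 2 of Lemma \ref{l:weak} (moving the convolution onto the smooth test function) does not survive the extra non-smooth factor $e'(W_\e)$; indeed the production cannot tend to zero for every convex entropy, since it must be strictly negative at the shocks of $W$. What you actually need, and what is available from the bound $\int_{-L}^L|g_\e(t,x)|\dd x\le \|V'\|_{L^\infty}\mathrm{TV}_{[-L,L]}W_\e^2(t,\cdot)$ already used in Lemma \ref{l:weak} via \eqref{e:est_W2}, is a uniform local $L^1$ (hence measure) bound on $e'(W_\e)(g_\e-g_\e\ast\eta_\e)$, which gives finiteness, not vanishing, of the limiting production; with that correction your step is sound and arguably a natural alternative to the Besov embedding.

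Two smaller remarks. First, your detour about $\essinf\rho_0=0$ is both unnecessary and not a valid reduction: $\Phi(\xi)=\int_0^\xi sV'(s)\dd s$ is already strictly decreasing on $[0,\esssup\rho_0]$ whenever $V'\le-\kappa_2<0$, because the integrand is negative for a.e.\ $s>0$, so the trace ordering follows without any lower bound on the density; conversely, approximating by strictly positive data via Lemma \ref{theo:stability} would produce constants that blow up as the lower bound vanishes, so it cannot be used to transfer a quantitative one-sided Lipschitz bound on $W$ itself. Second, your final sentence about uniqueness forcing convergence of the whole family is part of Corollary \ref{T_main} rather than of this lemma, but it is consistent with how the paper assembles the pieces.
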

\begin{proof}
	
	We already know from Lemma \ref{l:weak} that $W$ is a weak solution of \eqref{eq:cll}. 
	Moreover, since $W$ is a limit point of $W_\e$, then $W^2\in \BV_{\loc}((0,T)\times \R)$. 
	We check that this implies $W \in B^{1/3,3}_{\infty,\loc}((0,T)\times \R)$:
	indeed, given $\Omega$ compactly contained in $(0,T)\times \R$ and $h \in \R^2$ sufficiently small, we have
	\[
	\int_\Omega |D_h W_\e|^3 \dd x\le \| \rho_{0}\|_{L^\infty(\R)} \int_\Omega |D_h W_\e|^2 \dd x\le 
 \| \rho_{0}\|_{L^\infty(\R)} \int_{\Omega_h} |D_h W_\e^2| \le  \| \rho_{0}\|_{L^\infty(\R)} |h| \mathrm{TV}_{\Omega_h} W_\e^2,
	\]
	where $\Omega_h:= \{ (t,x) \in (0,T)\times \R: \mathrm{dist}(x,\Omega)\le |h|\}$ and we used $0\le W_\e\le \| \rho_{0}\|_{L^\infty(\R)}$.
	Weak solutions $W$ to Burgers equation belonging to $B^{1/3,3}_{\infty,\loc}((0,T)\times \R)$ enjoy a kinetic formulation (see \cite[Theorem 2.6]{MR4054358}) and for every weak solution enjoying a kinetic formulation there are countably many Lipschitz continuous curves $\gamma_n:[0,T)\to \R$ such that for every entropy-entropy flux pair $(e,f)$ and every $\varphi \in C^\infty_c((0,T)\times \R)$ we have 
	\be\label{e:dissipation}
	\begin{split}
		\iint_{(0,T)\times \R} & \left(e(W) \partial_t \varphi +  f(W) \partial_x \varphi\right)  \dd x \dd t  \\
		= &\sum_{n=1}^\infty \int_0^T \varphi \left[ f(W^+)- f(W^-) - \dot \gamma_n(t) (e(W^+) - e(W^-)) \right] (t,\gamma_n(t))\dd t,
	\end{split}
	\eq
	where $W^\pm$ denotes the traces of $W$ along $\gamma_n$ (see \cite{marconi_rectifiability}). The uniform one-side bound on $g_\e$ proven in Proposition \ref{th:og} implies that for 
	every $n$ and a.e. $t\in (0,T)$ we have  $W^+(t,\gamma_n(t)+) \ge W^-(t,\gamma_n(t)-)$. 
	Since $u\mapsto uV(u)$ is concave, then it is well-known that the shocks with $W^+ \ge W^-$ are entropic, namely for every convex entropy $e$ and every $W^- \le W^+$ we have 
	\[ 
	f(W^+)- f(W^-) - \dot \gamma_n(t) (e(W^+) - e(W^-))  \ge 0.
	\]
	In particular, by \eqref{e:dissipation}, we have that $W$ is the entropy solution of \eqref{eq:cll}.
\end{proof}

\begin{lemma}[Convergence of $\rho_\eps$]\label{l:u}
Let us assume that \eqref{e:claim} holds. Then the functions $\rho_{\eps}$ converge to $W$ in $L^1_{\loc}((0,T)\times \R)$ as $\eps \to 0$.
\end{lemma}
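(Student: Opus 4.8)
The starting point is the pointwise identity $\rho_\eps = W_\eps - \eps\,\partial_x W_\eps$, which is just a rewriting of~\eqref{eq:WWxq}. Multiplying the relation $\eps\,\partial_x W_\eps = W_\eps - \rho_\eps$ by $2W_\eps$ and using $W_\eps\in W^{1,\infty}$ gives
\[
2\,W_\eps\,(W_\eps - \rho_\eps) \;=\; \eps\,\partial_x\big(W_\eps^2\big) \qquad \text{a.e. in } (0,T)\times\R .
\]
The plan is to show that the right-hand side tends to $0$ in $L^1_{\loc}((0,T)\times\R)$ and to extract from this the strong convergence of $\rho_\eps$. The first point is immediate: in the proof of Lemma~\ref{l:weak} we already saw that $\{W_\eps^2\}$ is equibounded in $\BV_{\loc}((0,T)\times\R)$, so for every compact $K\Subset(0,T)\times\R$ we have $\|\partial_x(W_\eps^2)\|_{L^1(K)}\le C_K$ uniformly in $\eps$ (here $\partial_x(W_\eps^2)$ is a genuine $L^1_{\loc}$ function, since $W_\eps\in W^{1,\infty}$). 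Multiplying by $\eps$ yields $\eps\,\partial_x(W_\eps^2)\to 0$ in $L^1_{\loc}((0,T)\times\R)$, hence $W_\eps(W_\eps-\rho_\eps)\to 0$ in $L^1_{\loc}((0,T)\times\R)$.

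Next, I would work along the subsequence $\eps_k$ given by Lemma~\ref{l:weak}, along which $W_{\eps_k}\to W$ in $L^1_{\loc}((0,T)\times\R)$. Up to a further (not relabeled) extraction I may assume in addition that $W_{\eps_k}\to W$ and $W_{\eps_k}(W_{\eps_k}-\rho_{\eps_k})\to 0$ a.e., and --- by Banach--Alaoglu, using the uniform bound~\eqref{eq:uniform_bound} --- that $\rho_{\eps_k}\weakstar v$ in the weak-$\ast$ topology of $L^\infty((0,T)\times\R)$. Since $W_{\eps_k}=\rho_{\eps_k}\ast\eta_{\eps_k}$ with $\{\eta_{\eps_k}\}$ an approximate identity, testing against $\varphi\in C^\infty_c$ and using $\|\rho_{\eps_k}\|_{L^\infty}\le\|\rho_0\|_{L^\infty}$ one gets $\int W_{\eps_k}\varphi\to\int v\,\varphi$, so $v=W$; thus $\rho_{\eps_k}\weakstar W$.

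Fix now a compact $K\Subset(0,T)\times\R$ and split it into $K\cap\{W\neq 0\}$ and $K\cap\{W=0\}$. On the first set, at a.e.\ point $W_{\eps_k}\to W\neq 0$ while $W_{\eps_k}(W_{\eps_k}-\rho_{\eps_k})\to 0$, hence $\rho_{\eps_k}\to W$ a.e.; since $0\le\rho_{\eps_k}\le\|\rho_0\|_{L^\infty}$, dominated convergence gives $\rho_{\eps_k}\to W$ in $L^1(K\cap\{W\neq 0\})$. On the set $K\cap\{W=0\}$ the quadratic quantity $W_{\eps_k}(W_{\eps_k}-\rho_{\eps_k})$ carries no information (one is dividing by $W_{\eps_k}\to 0$), and this is the only genuinely delicate point: here I would instead use $\mathds{1}_{K\cap\{W=0\}}\in L^1((0,T)\times\R)$ as a test function for $\rho_{\eps_k}\weakstar W$, obtaining $\int_{K\cap\{W=0\}}\rho_{\eps_k}\to\int_{K\cap\{W=0\}}W=0$; because $\rho_{\eps_k}\ge 0$, this says exactly that $\rho_{\eps_k}\to 0=W$ in $L^1(K\cap\{W=0\})$. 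Adding the two contributions, $\rho_{\eps_k}\to W$ in $L^1_{\loc}((0,T)\times\R)$.

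Finally, to pass from the subsequence to the whole family, note that by Lemmas~\ref{l:weak} and~\ref{l:strong} every subsequence of $\{W_\eps\}$ is precompact in $L^1_{\loc}$ with every accumulation point equal to the unique entropy-admissible solution $W$ of~\eqref{eq:cll}, so $W_\eps\to W$ in $L^1_{\loc}((0,T)\times\R)$; applying the argument above to an arbitrary subsequence of $\{\rho_\eps\}$ then produces a further subsequence converging to the same limit $W$, and therefore $\rho_\eps\to W$ in $L^1_{\loc}((0,T)\times\R)$. As already indicated, the main obstacle is the behaviour on $\{W=0\}$: strong convergence there cannot be read off from $W_\eps(W_\eps-\rho_\eps)$ alone, and it is the non-negativity of $\rho_\eps$ together with the weak-$\ast$ convergence that closes the argument.
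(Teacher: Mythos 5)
Your proof is correct, and its first half coincides with the paper's: the identity $\rho_\eps=W_\eps-\eps\partial_x W_\eps$, the observation that $W_\eps(W_\eps-\rho_\eps)=\tfrac{\eps}{2}\partial_x W_\eps^2\to 0$ in $L^1_{\loc}$ thanks to the equi-boundedness of $W_\eps^2$ in $\BV_{\loc}$ (so, implicitly, the assumption $V'\le-\kappa_2$ of Lemma~\ref{l:weak}, exactly as in the paper), and the resulting a.e.\ convergence $\rho_{\eps_k}\to W$ on $\{W\neq 0\}$. Where you genuinely diverge is the set $\{W=0\}$, which is the delicate point in both arguments. You identify the weak-$*$ limit of $\rho_{\eps_k}$ in $L^\infty$ with $W$ by exploiting $W_\eps=\rho_\eps\ast\eta_\eps$ and the approximate-identity property of $\eta_\eps$ (the commutator estimate $\int\rho_{\eps_k}(\varphi\ast\tilde\eta_{\eps_k}-\varphi)\to 0$ is valid, uniformly in $t$, by the $L^\infty$ bound), and then use the elementary but decisive fact that nonnegative functions converging weakly-$*$ to $0$ on a set of finite measure converge strongly in $L^1$ there; this closes the argument. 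The paper proceeds differently: it tests \eqref{eq:cl} and \eqref{e:nle*} with approximations of the characteristic functions of the trapezoids $A(\bar t,L)$, compares the two mass balances \eqref{e:balance1}--\eqref{e:balance2}, and uses the monotonicity of the exiting fluxes $\mathcal F^\pm$ together with $\rho_\eps\ge 0$ to force $\int_{\{W=0\}\cap[-L,L]}\rho_{\eps_k}(\bar t,\cdot)\to 0$. Your route is shorter and needs less machinery (no selection of Lebesgue-point times and radii, and no use of $g_\eps-g_\eps\ast\eta_\eps\to 0$ in $\mathcal D'([0,T)\times\R)$ up to $t=0$ to derive the balance for $W$), while the paper's route yields the slightly stronger information of convergence on a.e.\ fixed time slice, which is not needed for the statement. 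Your concluding reduction from the subsequence $\eps_k$ to the full family, via Lemmas~\ref{l:weak} and~\ref{l:strong} and the uniqueness of the entropy solution, matches the paper's final remark that the limit does not depend on the subsequence.
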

\begin{proof}
	Owing to the specific choice of the kernel $\eta_\e$, we have the relation
	\be \label{e:uW}
	\rho_{\eps} = W_\eps - \eps \px W_\eps.
	\eq
	Therefore, by \eqref{e:est_W2}, we deduce 
	\[
	W_{\e}^2 - W_{\e} \rho_{\e} = W_{\e} ( W_{\e} - \rho_{\e}) = \e W_{\e} z_{\e} = \frac{\e}{2}\partial_x W_{\e}^2 \to 0 \quad \mbox{in }L^1_{\loc}((0,T)\times \R),
	\]
	so that there is a sequence $\e_k\to 0$ such that $\rho_{\e_k}$ converges to $W$ a.e. in the set $\{W\ne 0\}$.
	
	We now discuss the convergence on the set $\{ W= 0 \}$.
	Given $\bar t, L>0$, let us define
	\[
	A(\bar t, L):= \{ (t,x) \in (0,T)\times \R : x \in (-L - V_{\max}(\bar t - t), L + V_{\max}(\bar t -t))\},
	\]
	where $V_{\max} = V(0)= \max V$. 
	Up to removing a negligible set of values for $\bar t$ and $L$, we can assume that $\mathcal H^1$-a.e.\ point in $\partial A(\bar t, L) \cap (0,T)\times \R$ is a Lebesgue point of $W_{\e_k}$ and $\rho_{\e_k}$ for every $k\in \N$.
	Taking a further subsequence of $\e_k$, which we do not rename, we can assume that $W_{\e_k}$ converges to $W$ a.e.\ in $(0,T)\times \R$.
	
	Given $h>0$, let us consider an increasing function $\chi_h \in C^\infty(\R)$ such that
	\[
	\chi_h(x) = 
	\begin{cases}
	1 & \mbox{if }x\ge h,\\
	0 & \mbox{if }x \le 0,
	\end{cases}
	\]
	and the approximation $\varphi_h$ of the characteristic function of $A(\bar t,L)$ defined by
	\[
	\varphi_h(t,x) = \chi_h(\bar t - t) \chi_h ( x + L +V_{\max}(\bar t - t)) \chi_h ( L + V_{\max}(\bar t -t) - x).
	\]
	Testing \eqref{eq:cl} with $\varphi_h$ and letting $h\to 0$, we get
	\be \label{e:balance1}
	\int_{-L - V_{\max}\bar t }^{L + V_{\max}\bar t } \rho_{0}(x) \dd x - \int_{-L}^L \rho_{\e} (\bar t, x) \dd x= \int_0^{\bar t} \mathcal F^+(\rho_{\e})(t) \dd t +  \int_0^{\bar t} \mathcal F^-(\rho_{\e})(t) \dd t, 
	\eq
	where
	\[
	\begin{split}
	\mathcal F^+(\rho_{\e})(t) := &\left(\rho_{\e} V(W_\e) + V_{\max} \rho_{\e}\right) (t, L + V_{\max}(\bar t - t)), \\
	\mathcal F^-(\rho_{\e})(t) := & \left(-\rho_{\e} V(W_\e) + V_{\max} \rho_{\e}\right) (t, -L - V_{\max}(\bar t - t))
	\end{split}
	\]
	are the exiting fluxes of the quantity $\rho_{\e}$ across the lateral boundaries of $A(\bar t,L)$.
	Since $\rho_{\e_k}\to W$ in the set $\{W\ne 0\}$ and $\rho_{\e_k}\ge 0$, then
	\be \label{e:limsup}
	\limsup_{k\to \infty} \int_{-L - V_{\max}\bar t }^{L + V_{\max}\bar t } \rho_{0}(x) \dd x - \int_{-L}^L \rho_{\e_k} (\bar t, x) \dd x\le  \int_{-L - V_{\max}\bar t }^{L + V_{\max}\bar t } \rho_{0}(x) \dd x - \int_{-L}^L W  (\bar t, x) \dd x.
	\eq
Similarly, observing that $\xi \mapsto \mathcal F^\pm(\xi)$ is increasing, we have
	\be \label{e:liminf}
	\liminf_{k\to \infty} \int_0^{\bar t} \mathcal F^+(\rho_{\e_k})(t) \dd t +  \int_0^{\bar t} \mathcal F^-(\rho_{\e_k}) \dd t \ge  \int_0^{\bar t} \mathcal F^+(W)(t) \dd t +  \int_0^{\bar t} \mathcal F^-(W)(t) \dd t.
	\eq
	Now let us test \eqref{e:nle*} with $\varphi_h$ and let $\e \to 0$: since $g_\e - g_\e \ast \eta_\e \to 0$ in the sense of distributions on $[0,T)\times \R$, we get
	\[
	\int_{(0,T)\times \R} \Big( W \pt \varphi_h + W V(W) \px \varphi_h \Big) \dd x \dd t + \int_\R \rho_{0}(x) \varphi_h(0,x) \dd x=0.
	\]
	Letting $h\to 0$, we thus obtain
	\be \label{e:balance2}
	\int_{-L - V_{\max}\bar t }^{L + V_{\max}\bar t } \rho_{0}(x)\dd x - \int_{-L}^L W (\bar t, x) \dd x= \int_0^{\bar t} \mathcal F^+(W)(t) \dd t +  \int_0^{\bar t} \mathcal F^-(W)(t) \dd t.
	\eq
	Comparing \eqref{e:balance1} and \eqref{e:balance2}, we get that the two inequalities \eqref{e:limsup}, \eqref{e:liminf} are actually equalities and the liminf and limsup are actually limits. In particular, since $\rho_{\e_k}\ge 0$, it follows from \eqref{e:limsup} and $\rho_{\e_k}\to W$ in $\{W\ne 0\}$ that
	\[ 
	\lim_{k\to \infty} \int_{\{ W= 0\} \cap [-L,L]} \rho_{\e_k}(\bar t,x) \dd x= 0
	\]
	and therefore $\rho_{\e_k}(\bar t) \to W_\eps(\bar t)$ in $L^1_{\loc}(\R)$.
	Since the limit $W$ does not depend on the subsequence $\e_k$ we are considering, we conclude that 
	\[
	\rho_{\e} \to W \qquad \mbox{in }L^1_{\loc}((0,T)\times \R). \qedhere
	\]
\end{proof}
\begin{remark}[Effect of a lower bound on the density]
		The proof of the convergence result is easier and self-contained if we also assume a lower bound on the density: 
  \begin{align}\label{eq:lbd}
      \essinf \rho_{0}  \ge c_0 >0.
  \end{align}
From \eqref{eq:lbd}, we can show 
\begin{align}\label{eq:lbd-e}
\essinf \rho_{\e} \ge \ess \inf \rho_{0}\ge c_0 > 0.
\end{align}
   Let us note that, in this case, the generalized California model and the Greenberg model  mentioned above (which are not Lipschitz continuous at zero density) are well-posed.

From \eqref{e:claim}, \eqref{eq:lbd-e} and the upper bound on $V'\leq - \kappa_2$, we deduce that, for every $t>0$,
			\be 
             \sup_{\mathbb R}  \partial_x W_\e(t, \cdot) \geq - \frac{1}{\kappa \kappa_2 c_0 t}.
   \eq
			This implies that $W_\e \in \BV_{\loc}((0,+\infty)\times \R)$ uniformly with respect to $\e>0$. In particular, let $W$ be an accumulation point of 
			$W_\e$ as $\e \to 0$ in $L^1_\loc((0,+\infty)\times \R)$, then $W$ solves \eqref{eq:cll} and, since it is one-sided Lipschitz continuous, it coincides with the entropy solution $\rho$. 
			
			In order to complete the proof, we only need to show that $\rho_{\e}$ also converges to $\rho$. We follow the argument in \cite{2012.13203}: by \eqref{e:ker_relation} we have
			\[
			\rho_{\e} = W_\e - \e \partial_x W_\e.
			\]
			Being $\partial_x W_\e$ equi-bounded in $L^1_\loc$, the two sequences $\rho_{\e}$ and $W_\e$ converge to the same limit function $\rho$.
\end{remark}

\begin{proof}[Proof of Corollary \ref{T_main}] We proceed according to the following steps. \\
	\textbf{Step 1:} \emph{proof using Theorem \ref{th:o}.}  We assume \eqref{eq:ol} and apply Lemma~\ref{cor:bv} to deduce that \(\{W_{\eps}\}_{\eps >0}\) is compactly embedded in $L^{1}_{\loc}((0,T)\times\R)$. Then, by arguing as in \cite[Corollary~4.1 \& Theorem 4.2]{2012.13203}, we obtain that $W_\eps$ converges to the unique entropy solution of the local conservation law \eqref{eq:cll} and so does $\rho_{\eps}$. We only need to pay extra attention to the fact that the convergence holds on every compact set contained in the open set $t>0$. To this end, given a parameter $n \in \N$ and a test function $\varphi \in C^\infty_c([0,+\infty)\times\R)$, as in~\cite[Corollary 4.1 \& Theorem 4.2]{2012.13203}, by the compactness of $\{\rho_\eps\}_{\eps >0}$ in $L^1_{\loc}((0,T)\times\R)$, we can pass to the limit in the entropy inequality as $\eps \to 0^+$ and deduce 
\begin{align*}
    0 &\le 
    \underbrace{\int_{1/n}^{T}\int_{\R} \big(\eta(\rho(t,x)) \pt \varphi(t,x)  + q(\rho(t,x))\px \varphi(t,x) \big) \dd x \dd t}_{I_{1,n}} \\
      &\qquad + \underbrace{\int_{0}^{1/n}\int_{\R} \big( \bar \eta(t,x) \pt \varphi(t,x) +  \bar q(t,x) \px \varphi(t,x)\big) \dd x \dd t}_{I_{2,n}}  + \int_{\R} \eta(\rho_0(x)) \varphi(0,x) \dd x,
\end{align*}
where $\eta(\rho_\eps) \weaks \bar \eta$ and $q(\rho_\eps) \weakstar \bar q$ in $L^\infty(\R)$ by the uniform $L^\infty$-bound on $\{\rho_\eps\}_{\eps>0}$. By letting $n \to \infty$, we then deduce   
$$0 \le \int_{0}^{T}\int_{\R} \big(\eta(\rho(t,x)) \pt \varphi(t,x) + q(\rho(t,x)) \px \varphi(t,x) \big) \dd x \dd t + \int_{\R} \eta(\rho_0(x)) \varphi(0,x) \dd x,
$$
where we used the fact that $I_{2,n} \to 0$ because of the $L^1$ bound on the integrand. \\
\textbf{Step 2:} \emph{proof using Theorem \ref{th:og}.} We assume \eqref{e:claim}, then the claim follows by combining Lemmas \ref{l:weak}, \ref{l:strong}, and \ref{l:u}, and the computation above.
\end{proof}

\section{Numerical experiments}
\label{sec:numerics}

In this section, we illustrate the results of Theorem~\ref{th:o} and Theorem~\ref{th:og} with some numerical simulations. For the nonlocal problem, we rely on a non-dissipative solver based on characteristics (see \cite{pflug2} for further details). In particular, we consider the Greenshields velocity function $V(\xi) = 1-\xi$; in Figure \ref{fig:sim1} and Figure \ref{fig:sim2} we show the behavior of $t \mapsto \partial_x W_\eps(t,\cdot)$ for two types of initial data, continuous (Figure  \ref{fig:sim1}) and with a jump discontinuity (Figure \ref{fig:sim2}). We present simulations for both the exponential kernel (top row of Figures \ref{fig:sim1} and \ref{fig:sim2}) and for a piecewise constant kernel $\eta := \eps^{-1}\mathds{1}_{(0,\eps)}$ (bottom row of Figures \ref{fig:sim1} and \ref{fig:sim2}) which is not covered by the results of the present paper; the same result appears to hold in this case too. Finally, in Figure \ref{fig:sim3}
we highlight the $\mathrm{BV}$-regularization effect on $W$ provided by the Ole\u{\i}nik inequality. 

\begin{figure}[ht]
    \centering
    \includegraphics[scale=0.75,trim=3 29 20 0]{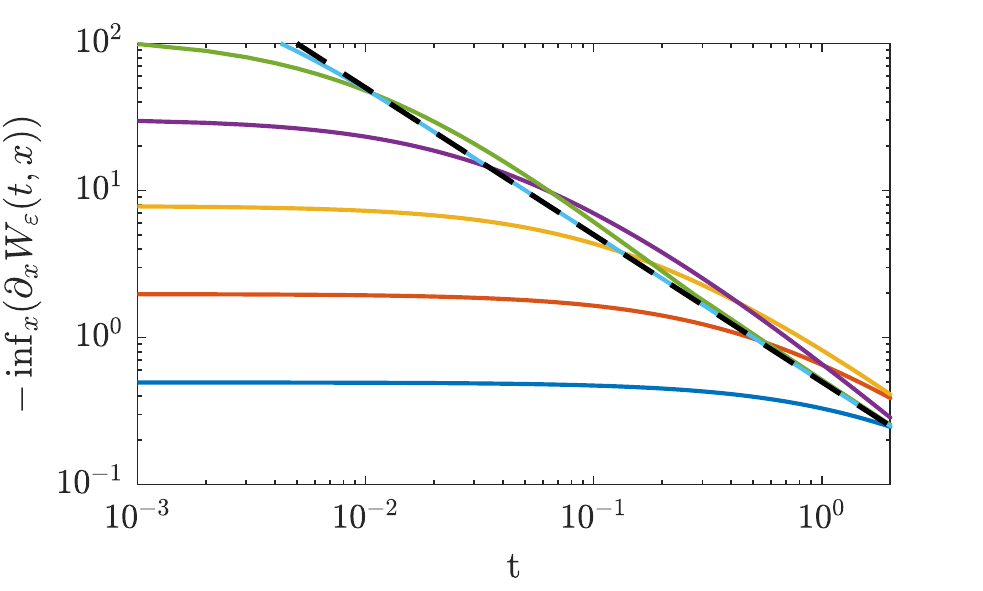}
\includegraphics[scale=0.75,trim=0 29 20 0]{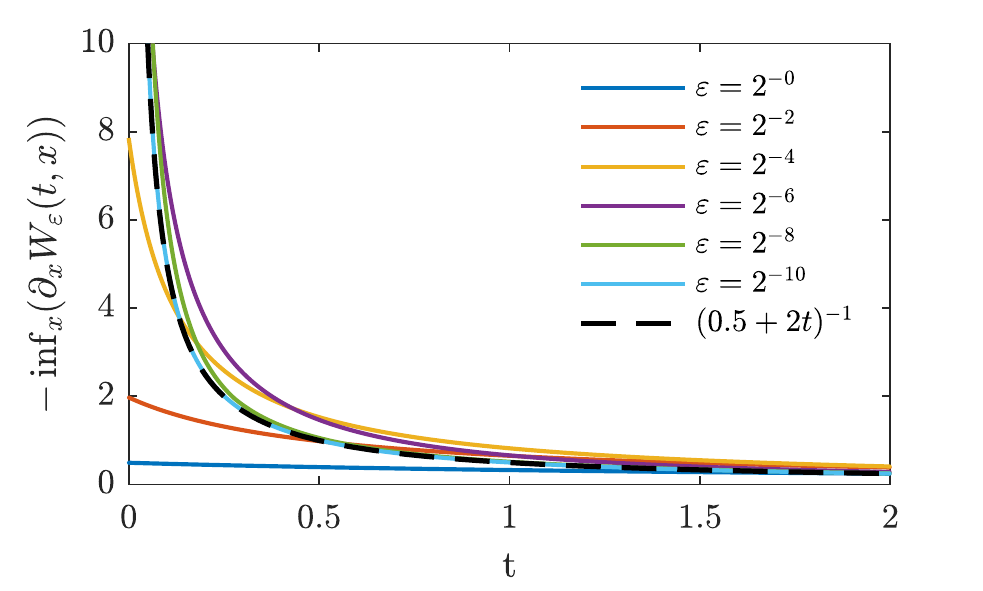}

\includegraphics[scale=0.75,trim=0 0 20 0]{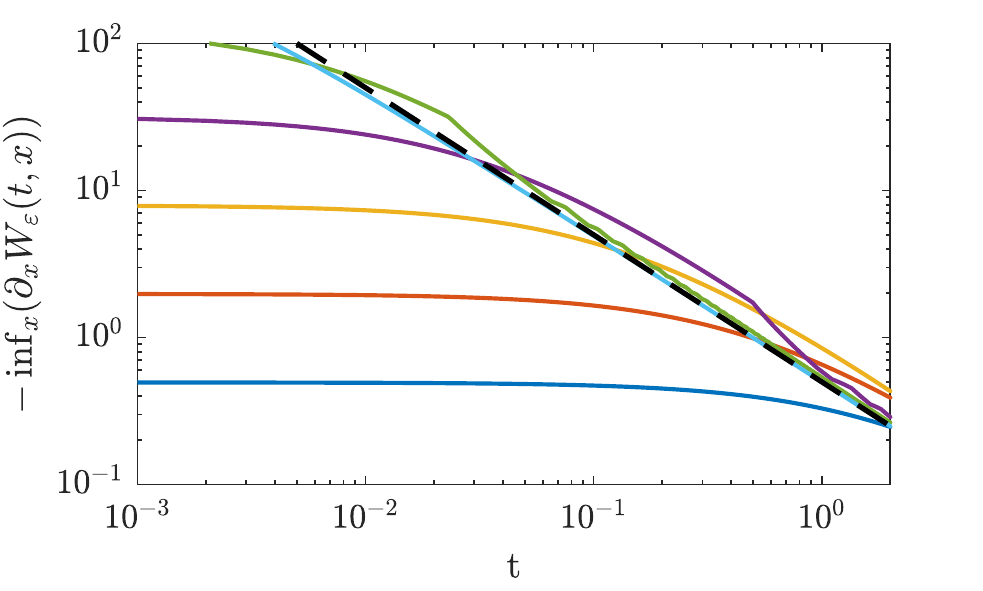}
\includegraphics[scale=0.75,trim=0 0 20 0]{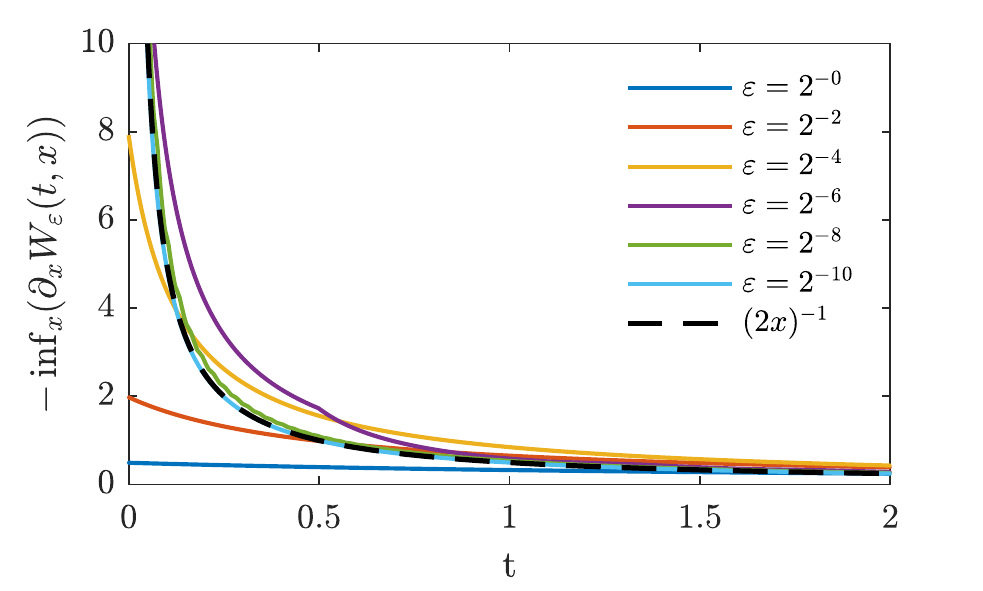}
    \caption{Illustration of $-\inf \partial_x W_\eps$. Simulations for the initial datum $\rho_0 := \frac{1}{2}\mathds{1}_{(-0.5,0.5)}$ and velocity $V(\xi) = 1-\xi$. \textsc{Top row}: kernel $\eta(\cdot) := \eps^{-1}\exp(-\cdot \eps^{-1})$. \textsc{Bottom row}: kernel $\eta := \eps^{-1}\mathds{1}_{(0,\eps)}$.}
    \label{fig:sim1}
\end{figure}

\begin{figure}[ht]
    \centering

\includegraphics[scale=0.75,trim=3 29 20 0]{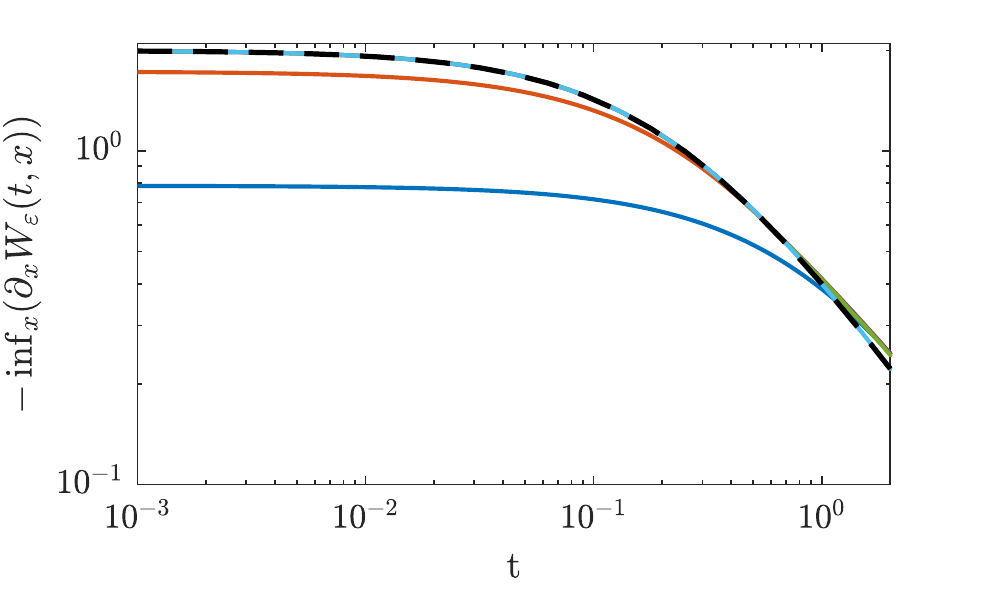}
\includegraphics[scale=0.75,trim=0 29 20 0]{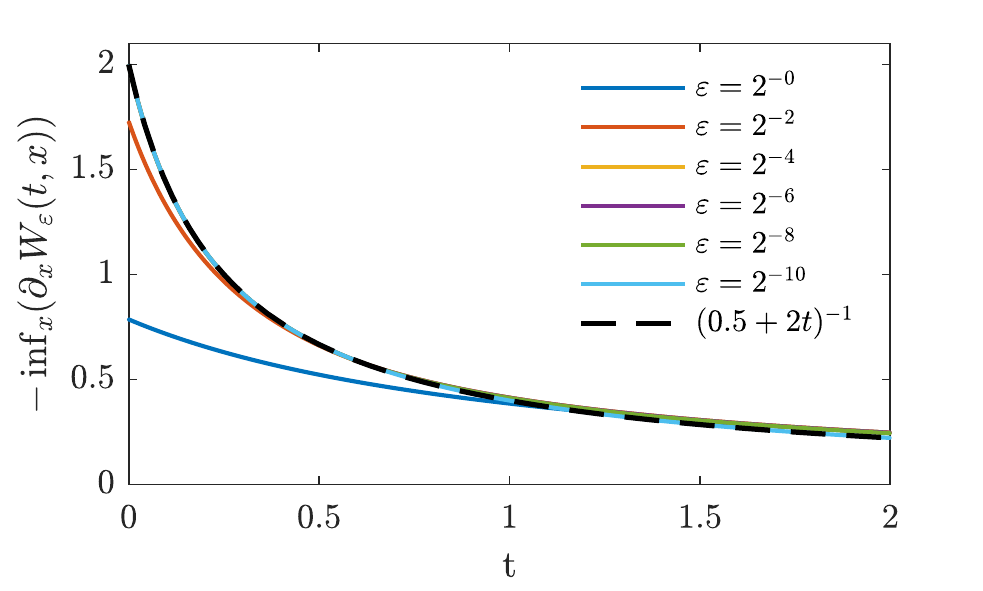}

\includegraphics[scale=0.75,trim=0 0 20 0]{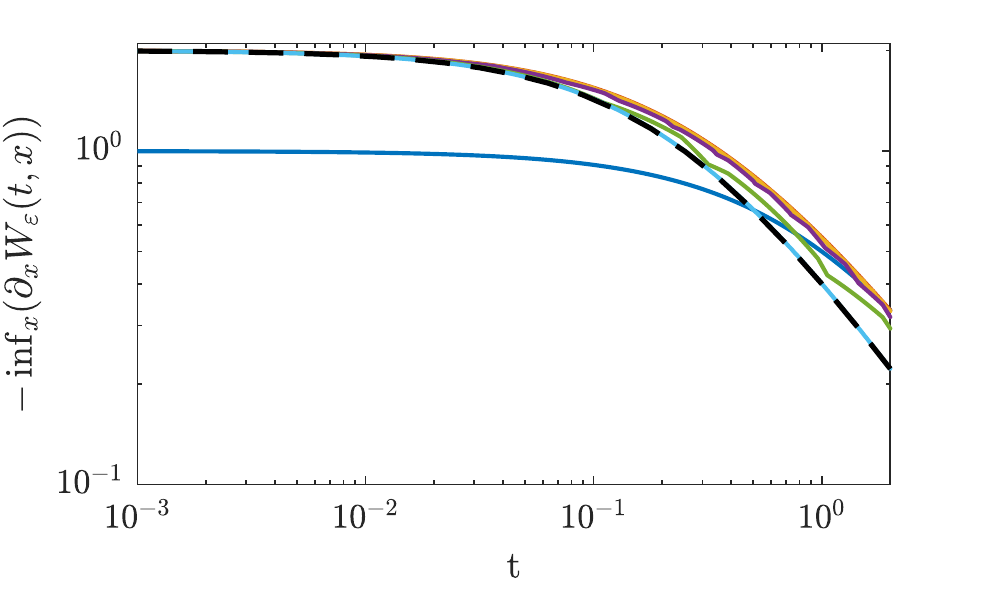}
\includegraphics[scale=0.75,trim=0 0 20 0]{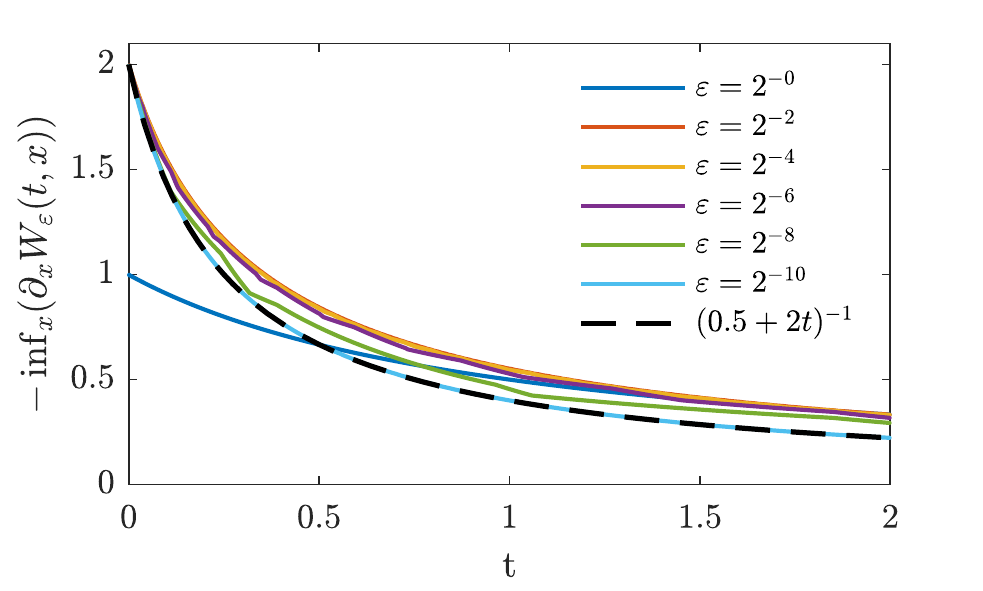}
   \caption{Illustration of $-\inf_{x\in \R} \partial_x W_\eps(t,x)$. Simulations for the initial datum $\rho_0(\cdot) := (1-2|\cdot|)\mathds{1}_{(-0.5,0.5)}$ and velocity $V(\xi) = 1-\xi$.  \textsc{Top row}: kernel $\eta(\cdot) := \eps^{-1}\exp(-\cdot \eps^{-1})$. \textsc{Bottom row}: kernel $\eta (\cdot):= \eps^{-1}\mathds{1}_{(0,\eps)}(\cdot)$}.
    \label{fig:sim2}
\end{figure}

\begin{figure}[ht]
    \centering
    \includegraphics[scale=0.75,trim=3 0 20 0]{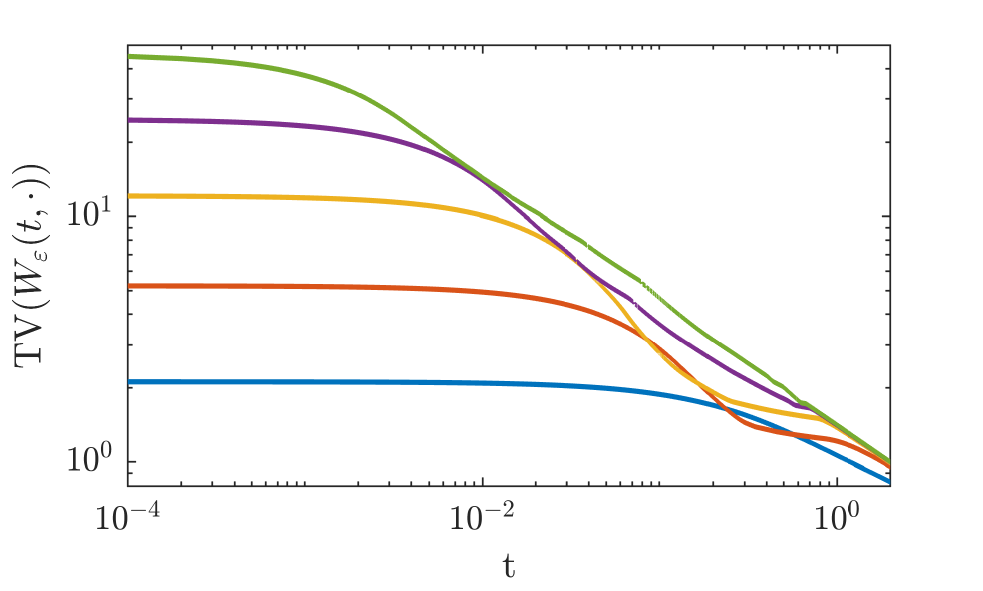}
    \includegraphics[scale=0.75,trim=0 0 20 0]{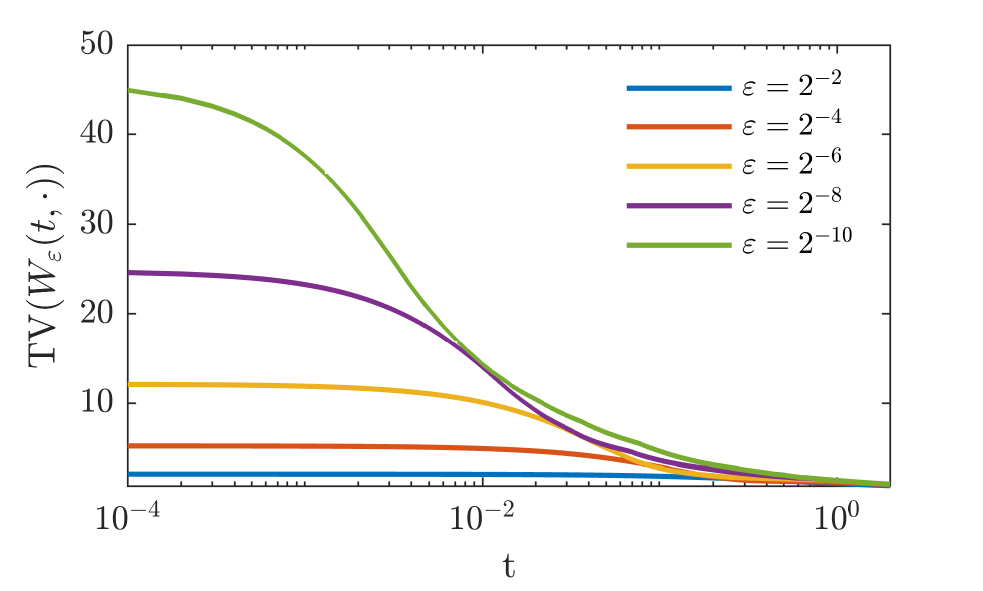}
    \caption{Illustration of $\mathrm{TV}(W_\eps(t,\cdot))$. Total variations of the nonlocal term $W_\eps$ for an initial datum with unbounded total variation, i.e.,  $\rho_0 \:= \sum_{n=1}^\infty \mathds{1}_{(1/{n+1},1/n+1+1/(2n(n+1)))}$, linear velocity $V(\xi) = 1-\xi$, and exponential kernel, i.e.,  $\eta(\cdot) := \eps^{-1}\exp(-\cdot \eps^{-1})$.}
    \label{fig:sim3}
\end{figure}

\section{Open problems}
\label{sec:open}

In this contribution, we proved several Ole\u{\i}nik-type inequalities for nonlocal conservation laws with exponential kernel.
 As a byproduct, we obtained some convergence results for the nonlocal-to-local limit problem without monotonicity or total variation assumptions on the initial data. Several questions remain open for future work: 
\begin{enumerate}
    \item the case of more general velocity functions (which, in turn, means more general initial data) that do not satisfy the technical assumptions in Theorems \ref{th:o} or \ref{th:og};
    \item the case of more general nonlocal weights (i.e., not necessarily of exponential type), as considered in \cite{2206.03949}.
\end{enumerate}

\vspace{5mm}
\section*{Acknowledgments}

We thank D.~Serre and E.~Zuazua for helpful remarks on the topics of this work. 

G.~M.~Coclite, N.~De~Nitti, E.~ Marconi and L.~V.~Spinolo are members of the Gruppo Nazionale per l'Analisi Matematica, la Probabilit\`a e le loro Applicazioni (GNAMPA) of the Istituto Nazionale di Alta Matematica (INdAM). G.~M.~Coclite has been partially supported by the  Research Project of National Relevance ``Multiscale Innovative Materials and Structures'' granted by the Italian Ministry of Education, University and Research (MIUR Prin 2017, project code 2017J4EAYB) and by the Italian Ministry of Education, University and Research under the Programme Department of Excellence Legge 232/2016 (Grant No. CUP - D94I18000260001). M.~Colombo is supported by the SNF Grant 182565. G.~Crippa is supported by the ERC StG 676675 FLIRT and by the SNF Project 212573 FLUTURA. N.~De Nitti has been partially supported by the Alexander von Humboldt Foundation and by the TRR-154 project of the Deutsche Forschungsgemeinschaft (DFG, German Research Foundation).  L.~Pflug has been supported by the DFG -- Project-ID 416229255 -- SFB 1411. E.~Marconi is supported by the Marie Sk\l odowska-Curie grant No. 101025032. L.~V.~Spinolo is a member of the PRIN 2020 Project 20204NT8W4.

\vspace{5mm}

\bibliographystyle{abbrv} 
\bibliography{NonlocalToLocal-ref.bib}

\begin{thebibliography}{10}

\bibitem{MR2954617}
F.~Ancona, O.~Glass, and K.~T. Nguyen.
\newblock Lower compactness estimates for scalar balance laws.
\newblock {\em Comm. Pure Appl. Math.}, 65(9):1303--1329, 2012.

\bibitem{MR3447130}
S.~Blandin and P.~Goatin.
\newblock Well-posedness of a conservation law with non-local flux arising in
  traffic flow modeling.
\newblock {\em Numer. Math.}, 132(2):217--241, 2016.

\bibitem{MR1618393}
F.~Bouchut and F.~James.
\newblock One-dimensional transport equations with discontinuous coefficients.
\newblock {\em Nonlinear Anal.}, 32(7):891--933, 1998.

\bibitem{MR2165401}
F.~Bouchut, F.~James, and S.~Mancini.
\newblock Uniqueness and weak stability for multi-dimensional transport
  equations with one-sided {L}ipschitz coefficient.
\newblock {\em Ann. Sc. Norm. Super. Pisa Cl. Sci. (5)}, 4(1):1--25, 2005.

\bibitem{MR1632980}
A.~Bressan and R.~M. Colombo.
\newblock Decay of positive waves in nonlinear systems of conservation laws.
\newblock {\em Ann. Scuola Norm. Sup. Pisa Cl. Sci. (4)}, 26(1):133--160, 1998.

\bibitem{MR4110434}
A.~Bressan and W.~Shen.
\newblock On traffic flow with nonlocal flux: a relaxation representation.
\newblock {\em Arch. Ration. Mech. Anal.}, 237(3):1213--1236, 2020.

\bibitem{MR4283539}
A.~Bressan and W.~Shen.
\newblock Entropy admissibility of the limit solution for a nonlocal model of
  traffic flow.
\newblock {\em Commun. Math. Sci.}, 19(5):1447--1450, 2021.

\bibitem{MR818862}
K.~S. Cheng.
\newblock A regularity theorem for a nonconvex scalar conservation law.
\newblock {\em J. Differential Equations}, 61(1):79--127, 1986.

\bibitem{2012.13203}
G.~M. Coclite, J.-M. Coron, N.~De~Nitti, A.~Keimer, and L.~Pflug.
\newblock A general result on the approximation of local conservation laws by
  nonlocal conservation laws: The singular limit problem for exponential
  kernels.
\newblock {\em Ann. Inst. H. Poincar\'{e} Anal. Non Lin\'{e}aire}, 2022.

\bibitem{MR4265719}
G.~M. Coclite, N.~De~Nitti, A.~Keimer, and L.~Pflug.
\newblock Singular limits with vanishing viscosity for nonlocal conservation
  laws.
\newblock {\em Nonlinear Analysis}, 211:Paper No. 112370, 12, 2021.

\bibitem{MR4502637}
G.~M. Coclite, N.~De~Nitti, A.~Keimer, and L.~Pflug.
\newblock On existence and uniqueness of weak solutions to nonlocal
  conservation laws with {BV} kernels.
\newblock {\em Z. Angew. Math. Phys.}, 73(6):Paper No. 241, 2022.

\bibitem{nwave2023}
G.~M. Coclite, N.~De~Nitti, A.~Keimer, L.~Pflug, and E.~Zuazua.
\newblock Long-time convergence of a nonlocal {B}urgers' equation towards the
  local {N}-wave.
\newblock {\em Submitted}, 2023.

\bibitem{MR4340167}
M.~Colombo, G.~Crippa, M.~Graff, and L.~V. Spinolo.
\newblock On the role of numerical viscosity in the study of the local limit of
  nonlocal conservation laws.
\newblock {\em ESAIM Math. Model. Numer. Anal.}, 55(6):2705--2723, 2021.

\bibitem{MR4300935}
M.~Colombo, G.~Crippa, E.~Marconi, and L.~V. Spinolo.
\newblock Local limit of nonlocal traffic models: convergence results and total
  variation blow-up.
\newblock {\em Ann. Inst. H. Poincar\'{e} C Anal. Non Lin\'{e}aire},
  38(5):1653--1666, 2021.

\bibitem{2206.03949}
M.~Colombo, G.~Crippa, E.~Marconi, and L.~V. Spinolo.
\newblock Nonlocal traffic models with general kernels: singular limit, entropy
  admissibility, and convergence rate.
\newblock {\em Arch. Ration. Mech. Anal.}, 247(2), 2023.

\bibitem{MR3961295}
M.~Colombo, G.~Crippa, and L.~V. Spinolo.
\newblock On the singular local limit for conservation laws with nonlocal
  fluxes.
\newblock {\em Arch. Ration. Mech. Anal.}, 233(3):1131--1167, 2019.

\bibitem{MR4172728}
J.-M. Coron, A.~Keimer, and L.~Pflug.
\newblock Nonlocal transport equations---existence and uniqueness of solutions
  and relation to the corresponding conservation laws.
\newblock {\em SIAM J. Math. Anal.}, 52(6):5500--5532, 2020.

\bibitem{MR0481581}
C.~M. Dafermos.
\newblock Characteristics in hyperbolic conservation laws. {A} study of the
  structure and the asymptotic behaviour of solutions.
\newblock In {\em Nonlinear analysis and mechanics: {H}eriot-{W}att {S}ymposium
  ({E}dinburgh, 1976), {V}ol. {I}}, Res. Notes in Math., No. 17, pages 1--58.
  Pitman, London, 1977.

\bibitem{MR3468916}
C.~M. Dafermos.
\newblock {\em Hyperbolic conservation laws in continuum physics}, volume 325
  of {\em Grundlehren der Mathematischen Wissenschaften [Fundamental Principles
  of Mathematical Sciences]}.
\newblock Springer-Verlag, Berlin, fourth edition, 2016.

\bibitem{MR2142881}
C.~De~Lellis and F.~Golse.
\newblock A quantitative compactness estimate for scalar conservation laws.
\newblock {\em Comm. Pure Appl. Math.}, 58(7):989--998, 2005.

\bibitem{MR2328174}
M.~Garavello and B.~Piccoli.
\newblock {\em Traffic flow on networks. {C}onservation laws models}, volume~1
  of {\em AIMS Series on Applied Mathematics}.
\newblock American Institute of Mathematical Sciences (AIMS), Springfield, MO,
  2006.

\bibitem{MR4054358}
F.~Ghiraldin and X.~Lamy.
\newblock Optimal {B}esov differentiability for entropy solutions of the
  eikonal equation.
\newblock {\em Comm. Pure Appl. Math.}, 73(2):317--349, 2020.

\bibitem{MR2405854}
O.~Glass.
\newblock An extension of {O}leinik's inequality for general 1{D} scalar
  conservation laws.
\newblock {\em J. Hyperbolic Differ. Equ.}, 5(1):113--165, 2008.

\bibitem{MR3461737}
P.~Goatin and S.~Scialanga.
\newblock Well-posedness and finite volume approximations of the {LWR} traffic
  flow model with non-local velocity.
\newblock {\em Netw. Heterog. Media}, 11(1):107--121, 2016.

\bibitem{MR688972}
D.~Hoff.
\newblock The sharp form of {O}le\u{\i}nik's entropy condition in several space
  variables.
\newblock {\em Trans. Amer. Math. Soc.}, 276(2):707--714, 1983.

\bibitem{MR47234}
E.~Hopf.
\newblock The partial differential equation {$u_t+uu_x=\mu u_{xx}$}.
\newblock {\em Comm. Pure Appl. Math.}, 3:201--230, 1950.

\bibitem{MR1855004}
H.~K. Jenssen and C.~Sinestrari.
\newblock On the spreading of characteristics for non-convex conservation laws.
\newblock {\em Proc. Roy. Soc. Edinburgh Sect. A}, 131(4):909--925, 2001.

\bibitem{MR3670045}
A.~Keimer and L.~Pflug.
\newblock Existence, uniqueness and regularity results on nonlocal balance
  laws.
\newblock {\em J. Differential Equations}, 263(7):4023--4069, 2017.

\bibitem{MR3944408}
A.~Keimer and L.~Pflug.
\newblock On approximation of local conservation laws by nonlocal conservation
  laws.
\newblock {\em J. Math. Anal. Appl.}, 475(2):1927--1955, 2019.

\bibitem{keimer2018existence}
A.~Keimer, L.~Pflug, and M.~Spinola.
\newblock Existence, uniqueness and regularity of multi-dimensional nonlocal
  balance laws with damping.
\newblock {\em J. Math. Anal. Appl.}, 466(1):18--55, 2018.

\bibitem{MR3890783}
A.~Keimer, L.~Pflug, and M.~Spinola.
\newblock Nonlocal scalar conservation laws on bounded domains and applications
  in traffic flow.
\newblock {\em SIAM J. Math. Anal.}, 50(6):6271--6306, 2018.

\bibitem{pflug2}
A.~Keimer, L.~Pflug, and M.~Spinola.
\newblock Nonlocal balance laws: Theory of convergence for nondissipative
  numerical schemes.
\newblock {\em Submitted}, 2020.

\bibitem{MR0094539}
O.~A. Lady\v{z}enskaya.
\newblock On the construction of discontinuous solutions of quasi-linear
  hyperbolic equations as limits of solutions of the corresponding parabolic
  equations when the ``coefficient of viscosity'' tends toward zero.
\newblock {\em Trudy Moskov. Mat. Ob\v{s}\v{c}.}, 6:465--480, 1957.

\bibitem{MR66040}
P.~D. Lax.
\newblock Weak solutions of nonlinear hyperbolic equations and their numerical
  computation.
\newblock {\em Comm. Pure Appl. Math.}, 7:159--193, 1954.

\bibitem{MR93653}
P.~D. Lax.
\newblock Hyperbolic systems of conservation laws. {II}.
\newblock {\em Comm. Pure Appl. Math.}, 10:537--566, 1957.

\bibitem{MR2119939}
P.~G. LeFloch and K.~Trivisa.
\newblock Continuous {G}limm-type functionals and spreading of rarefaction
  waves.
\newblock {\em Commun. Math. Sci.}, 2(2):213--236, 2004.

\bibitem{marconi_rectifiability}
E.~Marconi.
\newblock The rectifiability of the entropy defect measure for {B}urgers
  equation.
\newblock {\em J. Funct. Anal.}, 283(6):Paper No. 109568, 19, 2022.

\bibitem{MR0151737}
O.~A. Ole\u{\i}nik.
\newblock Discontinuous solutions of non-linear differential equations.
\newblock {\em Amer. Math. Soc. Transl. (2)}, 26:95--172, 1963.

\bibitem{MR584398}
L.~Tartar.
\newblock Compensated compactness and applications to partial differential
  equations.
\newblock In {\em Nonlinear analysis and mechanics: {H}eriot-{W}att
  {S}ymposium, {V}ol. {IV}}, volume~39 of {\em Res. Notes in Math.}, pages
  136--212. Pitman, Boston, Mass.-London, 1979.

\end{thebibliography}
\vfill 

\end{document}